\newcommand{\ga}{\alpha}
\newcommand{\gb}{\beta}
\renewcommand{\gg}{\gamma}
\newcommand{\gd}{\delta}
\newcommand{\gw}{\omega}
\newcommand{\gs}{\sigma}
\newcommand{\eps}{\varepsilon}
\newcommand{\cantor}{2^\gw}
\newcommand{\dotxgen}{{\dot x}_{\mathit{gen}}}
\newcommand{\supp}{\mathrm{supp}}
\newcommand{\dom}{\mathrm{dom}}
\newcommand{\coll}{\mathrm{Coll}}
\newtheorem{theorem}{Theorem}[section]
\newtheorem{claim}[theorem]{Claim}
\newtheorem{corollary}[theorem]{Corollary}
\newtheorem{proposition}[theorem]{Proposition}
\theoremstyle{definition}
\newtheorem{definition}[theorem]{Definition}
\title{Coloring triangles and rectangles\footnote{2020 AMS subject classification 03E35, 14P99, 05C15.}}
\author{
Jind{\v r}ich Zapletal\\
University of Florida\\
Institute of Mathematics, Czech Academy of Sciences\\
zapletal@ufl.edu}
\begin{document}
\maketitle

\begin{abstract}
It is consistent that ZF+DC holds, the hypergraph of rectangles on a given Euclidean space has countable chromatic number, while the hypergraph of equilateral triangles on $\mathbb{R}^2$ does not.
\end{abstract}

\section{Introduction}

This paper continues the study of algebraic hypergraphs on Euclidean spaces from the point of view of their chromatic numbers in the context of choiceless ZF+DC set theory. In the context of ZFC, such hypergraphs were completely classified by Schmerl regarding their countable chromatic number \cite{schmerl:avoidable}. In the choiceless context, the study becomes much more difficult and informative; in particular, the arity and dimension of the hypergraphs concerned begin to play much larger role. In this paper, I compare chromatic numbers of equilateral triangles with that of rectangles.

\begin{definition}
$\Delta$ denotes the hypergraph of arity three consisting of equilateral triangles on $\mathbb{R}^2$. Let $n\geq 2$ be a number. $\Gamma_n$ denotes the hypergraph of arity four consisting of rectangles on $\mathbb{R}^n$.
\end{definition}

\noindent In the base theory ZFC, these hypergraphs are well-understood. By an old result of \cite{ceder:basis}, $\Delta$ has countable chromatic number. On the other hand, for every number $n\geq 2$ the chromatic number of $\Gamma_n$ is countable if and only if the Continuum Hypothesis holds; this is a conjunction of \cite{erdos:old} and \cite[Theorem 2]{komjath:three}. In the base theory ZF+DC, I present an independence result:

\begin{theorem}
\label{maintheorem}
Let $n\geq 2$. It is consistent relative to an inaccessible cardinal that ZF+DC holds, the chromatic number of $\Gamma_n$ is countable, yet every non-null subset of $\mathbb{R}^2$ contains all vertices of an equilateral triangle.
\end{theorem}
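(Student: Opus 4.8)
The plan is to obtain $W$ as the symmetric derived model of a balanced forcing over a Solovay model, in the now-standard framework for $\mathrm{ZF}+\mathrm{DC}$ independence results about Euclidean hypergraphs. Fix an inaccessible cardinal $\kappa$ in the ground model $V$ and let $G\subseteq\colk$ be generic; then $V[G]$ is a Solovay model, and since $\colk$ is $\kappa$-c.c.\ of size $\kappa$ it satisfies CH. Working in $V[G]$, let $P=P_{\Gamma_n}$ be the poset whose conditions are countable partial functions $c\colon a\to\gw$ with $a\subseteq\mathbb{R}^n$ such that no four $c$-monochromatic points of $a$ form a rectangle and such that $\gw\setminus\rng(c)$ is infinite; the order is reverse inclusion. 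Let $H\subseteq P$ be generic, put $c_{\mathit{gen}}=\bigcup H$, and let $W$ be the derived model associated with $P$, so that $c_{\mathit{gen}}\in W$. Once $P$ is shown to be balanced, the general theory gives $W\models\mathrm{ZF}+\mathrm{DC}$.

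First I would verify that $c_{\mathit{gen}}$ is a \emph{total} function $\mathbb{R}^n\to\gw$ with no monochromatic rectangle, which immediately yields that $\Gamma_n$ has countable chromatic number in $W$. Absence of monochromatic rectangles is forced by compatibility of conditions in $H$. Totality holds because for every $x\in\mathbb{R}^n$ the set $\{c\in P:x\in\dom(c)\}$ is dense: given $c$ with $x\notin\dom(c)$, assign to $x$ any color in the infinite set $\gw\setminus\rng(c)$; no monochromatic rectangle arises, since that color was unused. The co-infiniteness clause in the definition of $P$ is exactly what makes this density argument work---without it one can build a condition that admits no extension adding $x$ at all.

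Second comes the verification that $P$ is balanced, which is technical but relatively soft. The balanced virtual conditions should be the \emph{total} rectangle-free colorings $\bar c\colon(\mathbb{R}^n)^{V[G]}\to\gw$ with co-infinite range; the conditions below such a $\bar c$ are precisely its countable restrictions, so two conditions below a common $\bar c$ lying in mutually generic extensions are amalgamated by the restriction of $\bar c$ to the (still countable) union of their domains. To see that balanced virtual conditions are dense below an arbitrary condition $c$, one must extend $c$ to a total rectangle-free coloring of this kind; as $V[G]\models\mathrm{CH}$, Komj\'ath's theorem \cite[Theorem 2]{komjath:three} furnishes \emph{some} total rectangle-free coloring $d$ of $\mathbb{R}^n$, and grafting $d$ onto $c$---after composing $d$ with an injection of $\gw$ onto a co-infinite subset of $\gw\setminus\rng(c)$, so that the old and grafted parts use disjoint palettes---produces the required $\bar c$, no new monochromatic rectangle being created.

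The remaining clause, that in $W$ every non-null $A\subseteq\mathbb{R}^2$ contains the vertices of an equilateral triangle, is where I expect the real difficulty. One ingredient is the $\mathrm{ZFC}$ fact that every Lebesgue measurable non-null $A\subseteq\mathbb{R}^2$ contains an equilateral triangle: pick a density point $x_0$ of $A$; by Fubini in polar coordinates around $x_0$ there is, for every $\eps>0$, an arbitrarily small radius $s$ for which the circle of radius $s$ about $x_0$ meets $A$ outside a set of angular measure $<\eps$, and then the three $2\pi/3$-rotates of that exceptional set cannot cover the circle, so some inscribed equilateral triangle has all three vertices in $A$. The other ingredient is the transfer to $W$: assuming for contradiction that a condition forces $\dot A$---defined from $c_{\mathit{gen}}$ and a ground-model real $z$---to be non-null and $\Delta$-independent, one refines below a balanced virtual condition $\bar c$; balance then forces, for each real $x$, that $\bar c$ and $z$ decide whether $x\in\dot A$, so on ground-model reals $\dot A$ coincides with a set definable in $L(\R)^{V[G]}$ from $z$ and a parameter, hence Lebesgue measurable there, hence (being non-null) containing a triangle, a contradiction. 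The hard point is that $P$ genuinely adds reals---the generic coloring codes a new real off any fixed ground-model sequence of reals---so one must in addition rule out that $\dot A$ is carried by these new reals. This calls for a measure-theoretic fusion analysis of $P$: if $x$ is a real forced into $\dot A$ by a countable fragment $c$ of $c_{\mathit{gen}}$, then by varying $c$ through mutually generic copies one should be able to complete $x$ to an equilateral triangle all of whose vertices are likewise forced into $\dot A$, again contradicting $\Delta$-independence. This last analysis, which ties the balance of $P$ together with the ``measure-Ramsey'' character of the triangle hypergraph, is the crux of the whole argument.
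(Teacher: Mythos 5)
Your coloring poset is too simple to carry the balance argument, and this is not an incidental shortcut: the elaborate clauses in the paper's Definition~\ref{posetdefinition} exist precisely to close a gap your poset leaves open. With your poset, take a total rectangle-free coloring $\bar c$ as the virtual condition, let $V[G_0],V[G_1]$ be mutually generic, and let $p_i\leq\bar c$ in $V[G_i]$. A monochromatic rectangle can appear in $p_0\cup p_1$: take a hypersphere $S$ coded in $V$ (any rectangle spanning the two sides has its circumscribing sphere with center and radius in $V[G_0]\cap V[G_1]=V$), two points $x_0,y_0\in\dom(p_0)\setminus V$ diametrically opposite on $S$ with $p_0(x_0)=p_0(y_0)$, and similarly $x_1,y_1\in\dom(p_1)\setminus V$. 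Nothing in your ordering forbids either equality, and any two diameters of a sphere span a rectangle, so $p_0\cup p_1$ is not rectangle-free and the two conditions are incompatible. The paper's ordering clauses (2), (3), (4)---forbidding equal colors on points opposite on a sphere/hyperplanes visible from the smaller condition, plus the ideal condition (4) governing how many colors can be forced out for new points---are exactly what makes the amalgamation go through. Your amalgamation argument is therefore broken at the outset, not merely incomplete.

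The second and more serious gap is the one you flag yourself: you do not have an argument for the triangle preservation, and your sketch of one is not correct. Balance of the poset does not let $\bar c$ and $z$ ``decide $x\in\dot A$ for each real $x$'', and restricting $\dot A$ to ground-model reals does not help because that restriction may be null even when $\dot A$ is forced non-null. The paper's key structural idea, which you do not identify, is the notion of $3,2$-Noetherian balance: one must be able to amalgamate \emph{three} conditions coming from \emph{pairwise mutually Noetherian} extensions (in the application, pairwise mutually random ones). This lets one pick three pairwise mutually random points of $\mathbb{R}^2$ forming an equilateral triangle (via the density-point-and-rotation argument you also sketch), put each into $\tau$ with a condition from the corresponding random extension, and then amalgamate all three. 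Proving that $P_n$ has this strengthened balance property is where the hypersphere/hyperplane/ideal bookkeeping is indispensable; it is also where the measure-theoretic content enters, via the fact that mutually random extensions are mutually Noetherian. Without isolating this strengthened balance notion there is no way to finish the preservation half of the theorem.
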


\noindent Note that the conclusion implies that the chromatic number of $\Delta$ is not countable: in a partition of $\mathbb{R}^2$ into countably many pieces, not all of them can be Lebesgue null. The consistency result can be achieved simultaneously for all $n\geq 2$. The proof seems to use the algebra and geometry of both rectangles and equilateral triangles in a way which does not allow an easy generalization.

The paper follows the set theoretic standard of \cite{jech:newset}. The calculus of geometric set theory and balanced pairs in Suslin forcings is developed in \cite[Section 5.2]{z:geometric}. If $n>0$ is a natural number, $A\subset\mathbb{R}^n$ and $F\subset\mathbb{R}$ are sets, the set $A$ is \emph{algebraic over} $F$ if there is a polynomial $p(\bar x)$ with $n$ many variables and coefficients in $F$ such that $A$ is the set of all solutions to the equation $p(\bar x)=0$. The set $A$ is \emph{semialgebraic over} $F$ if there is a formula $\phi$ of real closed fields with parameters in $F$ and $n$ free variables such that $A=\{\bar x\in\mathbb{R}^n\colon \mathbb{R}\models\phi(\bar x)\}$. If $X, Y, C$ are sets and $C\subset X\times Y$ and $x\in X$ is an element, then $C_x$ is the vertical section of $C$ associated with $x$, $C_x=\{y\in Y\colon \langle x, y\rangle\in C\}$. Let $X$ be a Polish space and $\mu$ a $\gs$-finite Borel measure on it. If $M$ is a transitive model of ZFC and $x\in X$ is a point, then $x$ is \emph{random} over $M$ if it belongs to no $\mu$-null Borel subset of $X$ coded in $M$. By the Fubini theorem, for points $x_0, x_1\in X$ the following are equivalent: (a) $x_0$ is random over $M$ and $x_1$ is random over $M[x_0]$, (b) $x_1$ is random over $M$ and $x_0$ is random over $M[x_1]$, and (c) the pair $\langle x_0, x_1\rangle$ is random over $M$ for the product measure on $X\times X$. In each case, I will say that $x_0, x_1$ are mutually random over $M$. The only measure appearing in this paper is the Lebesgue measure on $\mathbb{R}^2$ and the word null always pertains to it. DC denotes the axiom of dependent choice.

The author was partially supported by grant EXPRO 20-31529X of GA \v CR.  

\section{A preservation theorem}

To prove Theorem~\ref{maintheorem}, it is necessary to isolate a suitable preservation property of Suslin forcing. First, recall the main concepts of \cite{z:noetherian}.

\begin{definition}
\textnormal{\cite[Section 2]{z:noetherian}} Let $X, Y$ be Polish spaces. 

\begin{enumerate}
\item A closed set $C\subset Y\times X$ is a \emph{Noetherian subbasis} if there is no infinite sequence $\langle a_n\colon n\in\gw\rangle$ of finite subsets of $Y$ such that the sets $D_n=\bigcap_{y\in a_n}C_y$ are strictly decreasing with respect to inclusion;
\item if $M$ is a transitive model of set theory containing the code for $C$ and $A\subset X$ is a set, let $C(M, A)=\bigcap\{C_y\colon y\in Y\cap M$ and $A\subset C_y\}$;
\item generic extensions $V[G_0]$ and $V[G_1]$ are \emph{mutually Noetherian} if for all Polish spaces $X, Y$ and Noetherian subbases $C\subset Y\times X$ coded in the ground model, if $A_0\subset X$ is a set in $V[G_0]$ then $C(V[G_1], A_0)=C(V, A_0)$, and if $A_1\subset X$ is a set in $V[G_1]$ then $C(V[G_0], A_1)=C(V, A_1)$.
\end{enumerate}
\end{definition}

\noindent For example, mutually generic extensions are mutually Noetherian \cite[Corollary 2.10]{z:noetherian}, and if $x_0, x_1$ are mutually random reals, then $V[x_0]$ and $V[x_1]$ are mutually Noetherian \cite[Corollary 3.14]{z:noetherian}. Another easy and important observation is that the Noetherian assumption on the subbasis $C$ implies that the intersection defining the set $C(M, A)$ is always equal to the intersection of a finite subcollection; therefore, the set $C(M, A)$ is coded in $M$ no matter whether $A\in M$ or not.

I will need the following strengthening of balance of Suslin forcings.

\begin{definition}
Let $P$ be a Suslin forcing. 

\begin{enumerate}
\item  A pair $\langle Q, \gs\rangle$ is $3, 2$-\emph{Noetherian balanced} if $Q\Vdash\gs\in P$ and  for every collection $\langle V[G_i]\colon i\in 3\rangle$ of pairwise mutually Noetherian extensions of $V$, every collection $\langle H_i\colon i\in 3\rangle$ of filters on $Q$-generic over $V$ in the respective models $V[G_i]$, every tuple $\langle p_i\colon i\in 3\rangle$ of conditions in $P$ stronger than $\gs/H_i$ in the respective models $V[G_i]$ has a common lower bound;
\item $P$ is $3, 2$-\emph{Noetherian balanced} if for every condition $p\in P$ there is a $3, 2$-Noetherian balanced pair $\langle Q, \gs\rangle$ such that $Q\Vdash\gs\leq\check p$.
\end{enumerate}
\end{definition}

\noindent The following theorem is stated using the parlance of \cite[Convention 1.7.18]{z:geometric}.

\begin{theorem}
\label{preservationtheorem}
In every forcing extension of the choiceless Solovay model by a cofinally $3, 2$-Noetherian balanced Suslin forcing, every non-null subset of $\mathbb{R}^2$ contains all vertices of an equilateral triangle.
\end{theorem}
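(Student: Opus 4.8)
The plan is a proof by contradiction carried out inside the calculus of balance over the choiceless Solovay model, in the parlance of \cite[Convention 1.7.18]{z:geometric}. Write $W$ for the choiceless Solovay model derived from the inaccessible $\kappa$, let $P$ be cofinally $3,2$-Noetherian balanced in $W$, and suppose that some condition $\bar p\in P$ forces that $\dot A\subseteq\mathbb{R}^2$ is non-null and contains no equilateral triangle. First I would run the standard homogeneity reduction: pass to an intermediate forcing extension $V$ of the ground model in which $\kappa$ is still inaccessible and $W$ is still the Solovay model over $V$, such that the Suslin definition of $P$, the condition $\bar p$, and a name for $\dot A$ all lie in $V$ and $\dot A$ is forced definable from a real of $V$; then use cofinal balance to fix a $3,2$-Noetherian balanced pair $\langle Q,\sigma\rangle\in V$ with $Q\Vdash\sigma\le\check{\bar p}$, where we may assume $|Q|<\kappa$. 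Everything below is computed relative to this $V$, inside $W$.

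The key step is to convert non-nullity of $\dot A$ into non-nullity of a set of ``good'' points. Call $x\in\mathbb{R}^2$ \emph{good} if $x$ is random over $V$ and some $q\in Q$ forces, over $V[x]$, that $\sigma$ applied to the $Q$-generic filter forces $\check x\in\dot A$. Since below a balanced condition $P$ decides ``$\check x\in\dot A$'' for every real $x$, uniformly in the $Q$-generic filter realising it — a standard feature of balanced pairs, as definable invariants of the generic object are decided by $\sigma$ — a random $x$ fails to be good exactly when $Q$ forces over $V[x]$ that $\sigma$ applied to the generic forces $\check x\notin\dot A$. I would then argue that the set $D$ of good points is non-null. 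If $D$ were null, then the random real over $V$ is generically not good, so in the product of random forcing with $Q$ the trivial condition forces that $\sigma$ applied to the $Q$-generic filter forces the random real not to lie in $\dot A$; fixing a $Q$-generic filter $H$ over $V$ inside $W$ and plugging in, $\sigma/H\Vdash\check{x'}\notin\dot A$ for every $x'$ random over $V[H]$, so $\sigma/H$ forces $\dot A$ to be included in the union of the Borel null sets coded in $V[H]$, which is a single Borel null set because $\mathbb{R}^{V[H]}$ is countable in $W$; this contradicts $\sigma/H\le\bar p\Vdash\dot A$ non-null. Note that $D$ is measurable in $W$ since it is definable from parameters of $V$, so ``non-null'' means positive measure, and $W$ contains random reals over $V$ inside every positive-measure Borel set coded in $V$, as $\mathbb{R}^V$ is countable in $W$.

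Next I would feed $D$ into the algebra of equilateral triangles. In complex coordinates the equilateral triangle with two vertices $x_0,x_1$ has third vertex $x_2=e^{-i\pi/3}x_0+e^{i\pi/3}x_1$, and each of the three maps carrying one of the pairs $\langle x_0,x_1\rangle$, $\langle x_1,x_2\rangle$, $\langle x_0,x_2\rangle$ to another is a $\mathbb C$-linear, hence Lebesgue-preserving, bijection of $\mathbb{R}^4$ with coefficients in $V$; so whenever $\langle x_0,x_1\rangle$ is random over $V$ all three pairs are, and by \cite[Corollary 3.14]{z:noetherian} the models $V[x_0],V[x_1],V[x_2]$ are pairwise mutually Noetherian although $x_2\in V[x_0,x_1]$. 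Since $D$ is measurable and non-null, a density-point estimate (for $x_0,x_1$ in a small ball about a density point $x^*$ of $D$ one also has $x_2\approx x^*$, so all three vertices lie in $D$ for all but a null set of such $\langle x_0,x_1\rangle$) produces, inside $W$, a non-degenerate equilateral triangle $\{x_0,x_1,x_2\}$ with each vertex good and $\langle x_0,x_1\rangle$ random over $V$. For each $i<3$ fix $q_i\in Q$ witnessing goodness of $x_i$, take a filter $\langle H_0,H_1,H_2\rangle$ on $Q^3$ generic over $V[x_0,x_1,x_2]$ with $q_i\in H_i$, and set $V[G_i]=V[x_i,H_i]$. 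Re-coordinatising $\mathbb{R}^4$ through each of the three pairs shows that $V[G_0],V[G_1],V[G_2]$ are pairwise mutually generic over $V$, hence pairwise mutually Noetherian; moreover $H_i\in V[G_i]$ is $Q$-generic over $V$, $\sigma/H_i\in V[G_i]$, and $\sigma/H_i\Vdash\check x_i\in\dot A$. Then $3,2$-Noetherian balance of $\langle Q,\sigma\rangle$ supplies a common lower bound $q\in P$ of $\sigma/H_0,\sigma/H_1,\sigma/H_2$; since $q\le\bar p$ and $q\Vdash\check x_0,\check x_1,\check x_2\in\dot A$, the condition $q$ forces that $\dot A$ contains all vertices of an equilateral triangle, contradicting the choice of $\bar p$ and proving the theorem.

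The hard part is the second step — the passage from non-nullity of $\dot A$ to non-nullity of $D$ — which requires careful tracking of which reals are random over which intermediate model and a careful appeal to the balance calculus to see that membership of a possibly newly added random real in $\dot A$ is governed by a single balanced condition together with its $Q$-generic filter. The geometric ingredient is elementary, and the assembly through $3,2$-Noetherian balance is routine once the three pairwise mutually Noetherian extensions are in place; I expect the specific algebra of equilateral triangles to be essential only in realising the three vertices simultaneously as mutually random reals.
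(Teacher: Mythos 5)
Your overall strategy is parallel to the paper's: descend to an intermediate model below the inaccessible, fix a $3,2$-Noetherian balanced pair $\langle Q,\sigma\rangle$, use a density-point plus rotation argument to produce an equilateral triangle of pairwise mutually random reals, and close with an application of $3,2$-Noetherian balance. The geometric step in particular matches the paper (the paper rotates by $\pi/3$ around a density point; your complex-linear rewriting is the same argument).

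The gap is in the middle step, where you convert non-nullity of $\dot A$ into non-nullity of the set $D$ of good points. You declare $x$ \emph{good} when some $q\in Q$ forces over $V[x]$ that the balanced condition $\sigma/H$ itself forces $\check x\in\dot A$, and you appeal to a ``standard feature of balanced pairs'' that $\sigma/H$ decides $\check x\in\dot A$. That decidability is exactly what is \emph{not} available here and is what the paper is careful to avoid assuming. The standard proof of ``balanced conditions decide $\phi$'' runs by producing two conditions below $\sigma/H$ forcing $\phi$ and $\neg\phi$ in mutually generic (or mutually Noetherian) extensions, then invoking balance; but both such conditions are statements about the real $x$, so any model verifying them must contain $x$, and two extensions of $V$ that both contain a real $x\notin V$ cannot be mutually Noetherian (Proposition~\ref{subsetfact}(4)). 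So the dichotomy ``$\sigma/H\Vdash\check x\in\dot A$ or $\sigma/H\Vdash\check x\notin\dot A$'' does not follow from balance, and your inference from ``$x$ not good'' (merely $Q\Vdash\sigma/H\not\Vdash\check x\in\dot A$) to ``$Q\Vdash\sigma/H\Vdash\check x\notin\dot A$'' is unjustified. This is precisely what breaks your null-covering argument for $D$.

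The paper handles this by \emph{not} requiring $\sigma/H$ itself to force membership. Its claim asserts the existence of a small auxiliary poset $S$, a $Q\times R\times S$-name $\eta$ for a condition strictly below $\sigma$, and conditions $q,r,s$ such that $\eta$ (after the residual Solovay collapse) forces $\dot x_{gen}\in\tau$. The crucial asymmetry in the proof of that claim is that the ``negative'' witness is taken to be $\sigma/H_1$ itself, where $H_1$ is chosen generic over $V[K][H_0,x,p_0]$; then $V[K][H_1]$ does not contain $x$, so it is mutually generic with $V[K][H_0,x,p_0]$ over $V[K]$, and balance applies. Your argument has no analogue of this move. Correspondingly, in your final step you set $V[G_i]=V[x_i,H_i]$ and apply balance with $\sigma/H_i$ directly; the paper needs the extra filters $G_i\subset S$, mutually generic over $V[K][x_j:j\in 3]$, so that the (generally strictly stronger) conditions $p_i=\eta/H_i,x_i,G_i$ live in pairwise mutually Noetherian extensions. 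To repair your proof you would have to widen the definition of ``good'' to allow a condition below $\sigma/H$ in a small auxiliary extension, which brings back the poset $S$ and the name $\eta$ and in effect reproduces the paper's Claim.
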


\begin{proof}
Let $\kappa$ be an inaccessible cardinal. Let $P$ be a Suslin forcing which is $3, 2$-Noetherian balanced cofinally in $\kappa$. Let $W$ be a choiceless Solovay model derived from $\kappa$. Work in $W$. Suppose that $\tau$ is a $P$-name and $p\in P$ is a condition which forces $\tau$ to be a non-null subset of $\mathbb{R}^2$. I must find an equilateral triangle $\{x_0, x_1, x_2\}\subset\mathbb{R}^2$ and a condition in $P$ stronger than $p$ which forces $\check x_0, \check x_1, \check x_2\in\tau$. 

To do that, note that both $p, \tau$ are definable from a ground model parameter and an additional parameter $z\in \cantor$. Let $V[K]$ be an intermediate extension obtained by a poset of cardinality smaller than $\kappa$ such that $z\in V[K]$ and $P$ is $3, 2$-Noetherian balanced in $V_\kappa[K]$. Work in $V[K]$. Let $\langle Q, \gs\rangle$ be a $3, 2$-Noetherian balanced pair for the poset $P$ such that $|Q|<\kappa$ and $Q\Vdash\gs\leq\check p$. Let $R$ be the usual random poset of non-null closed subsets of $\mathbb{R}^2$ ordered by inclusion, adding an element $\dotxgen\in \mathbb{R}^2$. 

\begin{claim}
There is a poset $S$ of cardinality smaller than $\kappa$, a $Q\times R\times S$-name $\eta$ for a condition in $P$ stronger than $\gs$, and conditions $q\in Q$, $r\in R$, $s\in S$ such that  

$$\langle q, r, s\rangle\Vdash_{Q\times R\times S}\coll(\gw, <\kappa)\Vdash\eta\Vdash_P\dotxgen\in \tau.$$
\end{claim} 

\begin{proof}
Suppose towards a contradiction that this fails. In the model $W$, let $B\subset\mathbb{R}^2$ be the set of all points random over the model $V[K]$; thus, the complement of $B$ is null. Choose a filter $H_0\subset Q$ generic over $V[K]$ and consider the condition $\gs/H_0\leq p$ in the poset $P$. I will show that $\gs/H_0\Vdash_P\tau\cap B=0$, in contradiction to the initial assumptions of the condition $p\in P$.

To show this, let $x\in B$ be a point and $p_0\leq\gs/H_0$ be a condition; it will be enough to find a condition $p_1\in P$ compatible with $p_0$ which forces $\check x\notin\tau$. Let $H_1\subset Q$ be a filter generic over the model $V[K][H_0, x, p_0]$.
The contradictory assumption shows that $p_1=\gs/H_1\Vdash_P\check x\notin\tau$. At the same time, $V[K][H_0, x, p_0]$ and $V[K][H_1]$ are mutually generic extensions of the model $V[K]$. By the balance assumption on the pair $\langle Q, \gs\rangle$, the conditions $p_0$ and $p_1$ are compatible in $P$. This concludes the proof.
\end{proof}

\noindent Pick $S, \eta$ and $q\in Q, r\in R, s\in S$ as in the claim and move to the model $W$. Let $x_0\in r$ be a point random over $V[K]$. Since $x_0$ is a point of density of the set $r$, there must be a real number $\eps>0$ such that, writing $D\subset\mathbb{R}^2$ for the closed disc centered at $x_0$ of radius $\eps$, the relative measure of $r$ in $D$ is greater than $1/2$. Consider the measure-preserving self-homeomorphism $h$ of $\mathbb{R}^2$ rotating the whole plane around the point $x_0$ by angle $\pi/3$ counterclockwise. The disc $D$ is invariant under $h$; by the choice of $D$, $r\cap h^{-1}r\cap D$ is a closed set of positive measure. Let $x_1\in r\cap h^{-1}r\cap D$ be a point random over $V[K][x_0]$, and let $x_2=h(x_1)$. Clearly, the points $x_0, x_1, x_2\in r$ form an equilateral triangle. 

\begin{claim}
The points $x_0, x_1, x_2\in\mathbb{R}^2$ are pairwise random over $V[K]$.
\end{claim}

\begin{proof}
The point $x_1$ is chosen to be random over $V[K][x_0]$, therefore the points $x_0, x_1$ are mutually random over $V[K]$. The point $x_2$ is the image of $x_1$ under a measure-preserving self-homeomorphism in $V[K][x_0]$. Therefore, $x_2$ is random over $V[K][x_0]$, and $x_0, x_2$ are mutually random over $V[K]$. Finally, the point $x_2$ is the image of $x_0$ under the measure-preserving rotation around $x_1$ by angle $\pi/3$. Since $x_0$ is random over $V[K][x_1]$, so is $x_2$, and the points $x_1, x_2$ are mutually random over $V[K]$ as well.
\end{proof}

\noindent Now, for each $i\in 3$ let $H_i\subset Q$ and $G_i\subset S$ for $i\in 3$ be filters mutually generic over the model $V[K][x_j\colon j\in 3]$, containing the conditions $q, s$ respectively. The models $V[K][x_i]$ for $i\in 3$ are pairwise mutually Noetherian extensions of $V[K]$ by \cite[Corollary 3.14]{z:noetherian}. The models $V[K][x_i][G_i][H_i]$ for $i\in 3$ are then pairwise mutually Noetherian extensions of $V[K]$ as well by \cite[Proposition 2.9]{z:noetherian}. For each $i\in 3$ let $p_i=\eta/H_i, x_i, G_i\in P$. By the balance assumption on the pair $\langle Q, \gs\rangle$, the conditions $p_i$ for $i\in 3$ have a common lower bound in the poset $P$. By the forcing theorem applied in the respective models $V[K][x_i][G_i][H_i]$, this common lower bound forces $\{x_i\colon i\in 3\}\subset\tau$ as desired.
\end{proof}

\section{The coloring poset}

Let $n\geq 2$ be a number, and write $\Gamma_n$ for the hypergraph of rectangles in $\mathbb{R}^n$. To prove Theorem~\ref{maintheorem}, I must produce a $3, 2$-Noetherian balanced Suslin poset adding a total $\Gamma_n$-coloring. The definition of the poset uses, as a technical parameter, a Borel ideal $I$ on $\gw$ which contains all singletons and which is not generated by countably many sets. Further properties of the ideal $I$ seem to be irrelevant; the summable ideal will do.

\begin{definition}
\label{posetdefinition}
Let $n\geq 2$ be a number. The poset $P_n$ consists of partial functions $p\colon \mathbb{R}^n\to\gw$ such that there is a countable real closed subfield $\supp(p)\subset\mathbb{R}$ such that $\dom(p)=\supp(p)^n$, and $p$ is a $\Gamma_n$-coloring. The ordering is defined by $p_1\leq p_0$ if

\begin{enumerate}
\item $p_0\subset p_1$;
\item for every hypersphere $S\subset\mathbb{R}^n$ algebraic over $\supp(p_0)$ and any two points $x, y\in \dom(p_1\setminus p_0)$, if $x, y$ are opposite points on $S$ then $p_1(x)\neq p_1(y)$;
\item for any two parallel hyperplanes $P, Q\subset\mathbb{R}^n$ visible  in $\supp(p_0)$ and any two points $x, y\in \dom(p_1\setminus p_0)$, if $x, y$ are opposite points on the respective hyperplanes $P, Q$ then $p_1(x)\neq p_1(y)$;
\item if $a\subset\supp(p_1)$ is a finite set, then $p_1''\gd(p_0, p_1, a)\in I$ where $\gd(p_0, p_1, a)=\{x\in\dom(p_1\setminus p_0)\colon x$ is algebraic over $\supp(p_0)\cup a\}$.
\end{enumerate}
\end{definition}

\begin{proposition}
\label{closedproposition}
$\leq$ is a $\gs$-closed transitive relation.
\end{proposition}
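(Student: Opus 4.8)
The plan is to check the two claims directly from Definition~\ref{posetdefinition}: that $\leq$ is transitive, and that every descending $\gw$-sequence has a lower bound. Throughout I write $F_i$ for $\supp(p_i)$, so $F_i$ is a countable real closed subfield of $\R$ with $\dom(p_i)=F_i^n$, and I will use repeatedly that $p_1\leq p_0$ entails $F_0\subseteq F_1$ and that $\dom(p_1\setminus p_0)=F_1^n\setminus F_0^n$.

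For transitivity, suppose $p_2\leq p_1\leq p_0$; I would verify clauses (1)--(4) for the pair $p_2,p_0$. Clause (1) is immediate. For clauses (2) and (3) I would decompose $\dom(p_2\setminus p_0)$ as the disjoint union $\dom(p_1\setminus p_0)\sqcup\dom(p_2\setminus p_1)$ (using $F_0\subseteq F_1\subseteq F_2$), and for a witnessing pair $x,y$ --- antipodal on a hypersphere $S$ algebraic over $F_0$ in case (2), or opposite on a pair of parallel hyperplanes visible in $F_0$ in case (3) --- distinguish three cases. If $x,y\in\dom(p_1\setminus p_0)$, then $p_2(x)\neq p_2(y)$ follows from the corresponding clause for $p_1\leq p_0$ together with $p_1\subseteq p_2$; if $x,y\in\dom(p_2\setminus p_1)$, then $S$ (resp. the hyperplane pair) is a fortiori algebraic over (resp. visible in) the larger field $F_1$, so the inequality follows from the corresponding clause for $p_2\leq p_1$. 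The mixed case cannot occur: the operation sending a point to its ``opposite'' is an affine map with coefficients in $F_0$ --- point reflection through the center of $S$ in case (2), translation by the perpendicular displacement vector between the two hyperplanes in case (3) --- and any such map carries $F_1^n=\dom(p_1)$ into itself, so if one of $x,y$ lies in $\dom(p_1)$ then so does the other, contradicting that it lies in $\dom(p_2\setminus p_1)$.

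The step I expect to be the main obstacle is clause (4) for $p_2\leq p_0$. Given finite $a\subseteq F_2$, write $\gd(p_0,p_2,a)=Y\sqcup Z$ where $Y=\gd(p_0,p_2,a)\cap\dom(p_1\setminus p_0)$ and $Z=\gd(p_0,p_2,a)\cap\dom(p_2\setminus p_1)$. Handling $Z$ is routine: since $F_0\subseteq F_1$, a point algebraic over $F_0\cup a$ is algebraic over $F_1\cup a$, so $Z\subseteq\gd(p_1,p_2,a)$ and $p_2''Z\in I$ by clause (4) for $p_2\leq p_1$ (legitimate because $a\subseteq F_2$). The difficulty is $Y$: here $a$ need not lie in $F_1$, so clause (4) for $p_1\leq p_0$ is not directly applicable with parameter $a$, and one must replace $a$ by a finite subset of $F_1$ that is ``just as effective'' for points of $Y$. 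The plan is: let $K$ be the real closure inside $\R$ of the field $F_0(a)$ and set $E=K\cap F_1$, a subfield of $\R$ containing $F_0$ of transcendence degree at most $|a|$ over $F_0$, hence finite; fix a finite transcendence basis $b\subseteq E$ of $E$ over $F_0$, so $E$ is algebraic over $F_0(b)$. Then any $x\in Y$ has coordinates in $F_1$ (since $x\in\dom(p_1)$) that are algebraic over $F_0(a)$, hence lie in $K$, hence in $E$, hence are algebraic over $F_0\cup b$; as $b\subseteq E\subseteq F_1$, this yields $Y\subseteq\gd(p_0,p_1,b)$, whence $p_2''Y=p_1''Y\subseteq p_1''\gd(p_0,p_1,b)\in I$ by clause (4) for $p_1\leq p_0$. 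Since $I$ is an ideal, $p_2''\gd(p_0,p_2,a)=p_2''Y\cup p_2''Z\in I$, finishing transitivity.

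For $\gs$-closedness, given a descending sequence $\langle p_k\colon k\in\gw\rangle$ I would set $p_\gw=\bigcup_k p_k$. Then $\supp(p_\gw):=\bigcup_k F_k$ is a countable real closed subfield of $\R$ (an increasing union of real closed subfields of $\R$ is again one) and $\dom(p_\gw)=\supp(p_\gw)^n$; moreover $p_\gw$ is a $\Gamma_n$-coloring, since the four vertices of any rectangle inside $\dom(p_\gw)$ already lie in the domain of a single $p_m$, which colors them properly --- so $p_\gw\in P_n$. Finally I would check $p_\gw\leq p_m$ for each $m$. Clause (1) is clear; clauses (2) and (3) follow exactly as above, because a witnessing pair $x,y\in\dom(p_\gw\setminus p_m)$ already lies in $\dom(p_k\setminus p_m)$ for some $k>m$, where the inequality holds since $p_k\leq p_m$ by transitivity and $p_k\subseteq p_\gw$; and for clause (4), given finite $a\subseteq\supp(p_\gw)$ one picks $k>m$ with $a\subseteq F_k$, and then --- $F_k$ being real closed, it contains the real closure (inside $\R$) of $F_m(a)$ --- every point algebraic over $F_m\cup a$ lies in $\dom(p_k)$, so $\gd(p_m,p_\gw,a)\subseteq\gd(p_m,p_k,a)$ and $p_\gw''\gd(p_m,p_\gw,a)\subseteq p_k''\gd(p_m,p_k,a)\in I$ by clause (4) for $p_k\leq p_m$. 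Thus $p_\gw$ is a lower bound, as required. (Reflexivity is trivial, the three substantive clauses being vacuous for the pair $p,p$.)
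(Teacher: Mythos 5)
Your proof is correct and follows essentially the same route as the paper's: for transitivity you split cases (2) and (3) on whether the witnessing pair lies in $\dom(p_1)$ (your explicit observation that the opposite-point map is affine with coefficients in $\supp(p_0)$ and therefore preserves $F_1^n$ is exactly what justifies the paper's terse ``by the closure properties of $\dom(q)$''), and you handle (4) by replacing $a$ with a finite transcendence basis $b\subseteq\supp(p_1)$ of size at most $|a|$, which is the paper's ``inclusion maximal algebraically independent set'' in slightly different language. The $\gs$-closure argument is likewise the same as the paper's.
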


\begin{proof}
For the transitivity, suppose that $r\leq q\leq p$ are conditions in the poset $P_n$; I must show that $r\leq p$. Checking the items of Definition~\ref{posetdefinition}, (1) is obvious. For (2), suppose that $S$ is a hypersphere algebraic over $p$ and $x, y$ are opposite points on it in $\dom(r\setminus p)$. By the closure properties of $\dom(q)$, either both $x, y$ belong to $\dom(q)$ or both do not. In the former case (2) is confirmed by an application of (2) of $q\leq p$, in the latter case (2) is confirmed by an application of (2) of $r\leq q$. (3) is verified in a similar way. For (4), suppose that $a\subset\supp(r)$ is a finite set. Let $b\subset\supp(q)$ be an inclusion maximal set of points algebraic over $\supp(p)\cup a$ which is algebraically independent. Since finite algebraically independent sets over  $\supp(p)$ form a matroid, it must be the case that $|b|\leq |a|$ holds. Note that $\gd(p, r, a)\subseteq \gd(p, q, b)\cup \gd(q, r, a)$ and $r''\gd(p,r,a)\subseteq q''\gd(p, q, b)\cup r''(q, r, a)$. Thus, the set $r''\gd(p, r, a)$ belongs to $I$, since it is covered by two sets which are in $I$ by an application of (4) of $q\leq p$ and $r\leq q$.

For the $\gs$-closure, let $\langle p_i\colon i\in\gw\rangle$ be a descending sequence of conditions in $P_n$, and let $q=\bigcup_i p_i$; I will show that $q$ is a common lower bound of the sequence. Let $i\in\gw$ be arbitrary and work to show $q\leq p_i$. For brevity, I deal only with item (4) of Definition~\ref{posetdefinition}. Let $a\subset\supp(q)$ be a finite set. There must be $j\in\gw$ greater than $i$ such that $a\subset\supp(p_j)$. By the closure properties of $\dom(p_j)$, it follows that $\gd(p_i, q, a)=\gd(p_i, p_j, a)$. Thus, $q''\gd(p_i, q, a)=p_j''\gd(p_i, p_j, a)$ and the latter set belongs to $I$ by an application of (4) of $p_j\leq p_i$.
\end{proof}

\noindent Further analysis of the poset $P_n$ depends on a characterization of compatibility of conditions.

\begin{proposition}
\label{compproposition}
Let $p_0, p_1\in P_n$ be conditions. The following are equivalent:

\begin{enumerate}
\item $p_0, p_1$ are compatible;
\item for every point $x_0\in\mathbb{R}^n$ there is a common lower bound of $p_0, p_1$ containing $x_0$ in its domain;
\item the conjunction of the following: 

\begin{enumerate}
\item $p_0\cup p_1$ is a function and a $\Gamma_n$-coloring;
\item whenever $S$ is a hypersphere visible from $\supp(p_0)$ and $x, y\in\dom(p_1\setminus p_0)$ are opposite points on $S$, then $p_1(x)\neq p_1(y)$;
\item whenever $P, Q$ are parallel hyperplanes visible from $\supp(p_0)$ and $x, y\in\dom(p_1\setminus p_0)$ are opposite points on them, then $p_1(x)\neq p_1(y)$;
\item for every finite set $a\subset\supp(p_1)$, $p_1''\gd(p_0, p_1, a)\in I$;
\item items above with subscripts $0, 1$ interchanged.
\end{enumerate}
\end{enumerate}
\end{proposition}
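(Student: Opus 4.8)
The plan is to run the cycle $(2)\Rightarrow(1)\Rightarrow(3)\Rightarrow(2)$. Here $(2)\Rightarrow(1)$ is trivial, as $(2)$ in particular produces a common lower bound. For $(1)\Rightarrow(3)$ fix a common lower bound $r\leq p_0,p_1$; then $p_0\cup p_1\subseteq r$, so $p_0\cup p_1$ is a function, and any rectangle with all four vertices in $\dom(p_0\cup p_1)\subseteq\dom(r)$ is colored non-monochromatically by $r$, giving $(a)$. Since $\dom(p_1\setminus p_0)=\dom(p_1)\setminus\dom(p_0)\subseteq\dom(r)\setminus\dom(p_0)=\dom(r\setminus p_0)$ and $r$ extends $p_1$, clauses $(2)$, $(3)$, $(4)$ of Definition~\ref{posetdefinition} for $r\leq p_0$ specialize to $(b)$, $(c)$, $(d)$ here (for $(d)$ one also uses that $I$ is downward closed); $(e)$ is the symmetric consequence of $r\leq p_1$. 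Throughout I read ``algebraic over $F$'' and ``visible from $F$'' as synonymous for hyperspheres and hyperplanes.

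The substance is $(3)\Rightarrow(2)$. Given $p_0,p_1$ with $(3)$ and a point $x_0$, set $\supp(r)$ to be the real closure of $\supp(p_0)\cup\supp(p_1)$ and the coordinates of $x_0$, a countable real closed field, and write $F=\supp(p_0)\cap\supp(p_1)$, also real closed. Let $r=p_0\cup p_1$ on the ``old'' part $\supp(p_0)^n\cup\supp(p_1)^n$, enumerate the ``new'' points $\supp(r)^n\setminus(\supp(p_0)^n\cup\supp(p_1)^n)$ as $\langle z_k:k\in\gw\rangle$, and define $r(z_k)$ recursively. First, using that $I$ is not generated by countably many sets, fix an infinite $B\in I$ which is not contained, modulo a finite set, in any finite union of the sets $p_1''\gd(p_0,p_1,a)$ and $p_0''\gd(p_1,p_0,a)$ with $a$ finite --- these all lie in $I$ by $(d)$, $(e)$, there are countably many of them, and together with the finite sets they generate a proper subideal of $I$. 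Every $r(z_k)$ will be picked from $B$; this alone secures clause $(4)$ of $r\leq p_0$ and $r\leq p_1$, since for finite $a$ the set $\gd(p_0,r,a)$ meets $\dom(p_1)$ in a set whose $r$-image lies in some $p_1''\gd(p_0,p_1,b)\in I$ (the maximal-algebraically-independent-subset argument from the proof of Proposition~\ref{closedproposition}), while the rest of $\gd(p_0,r,a)$ consists of new points and so has $r$-image inside $B\in I$.

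At stage $k$ the color $r(z_k)\in B$ must avoid: $(\Gamma)$ the common color of the other three vertices of any monochromatic rectangle through $z_k$ all of whose other vertices are already colored; and, for each $i<2$, the colors of already-colored points $y$ lying opposite $z_k$ on a hypersphere visible from $\supp(p_i)$, and on a pair of parallel hyperplanes visible from $\supp(p_i)$ (these are the hypersphere and hyperplane clauses of $r\leq p_i$). The key claim is that this forbidden set of colors is the union of finitely many individual colors and finitely many of the sets $p_1''\gd(p_0,p_1,a)$, $p_0''\gd(p_1,p_0,a)$ fixed above, hence --- $B$ having been chosen to dodge these --- leaves infinitely many admissible colors in $B$. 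For $(\Gamma)$: if the other three vertices of the rectangle are all old, then a short case analysis on which two of them share a support shows that the rectangle is already forbidden by one of the hypersphere/hyperplane clauses just listed --- either its circumsphere is visible from a support and carries $z_k$ and its opposite vertex antipodally, or a pair of its sides lies in parallel hyperplanes visible from a support, with $z_k$ and its opposite vertex in the required position --- so $(\Gamma)$ adds nothing new; if one of the other vertices is a previously placed new point, it contributes only that point's single fixed color, and there are finitely many previous new points. For the hypersphere clause of $r\leq p_0$: an already-colored $y$ opposite $z_k$ on a hypersphere visible from $\supp(p_0)$ is either one of the finitely many previous new points or an old point with $z_k+y\in\supp(p_0)^n$ (and hence $y\in\supp(p_1)^n\setminus\supp(p_0)^n$); any two old $y$'s of the latter sort differ by an element of $F^n$, so all of them lie in $\gd(p_0,p_1,a)$ for a single finite $a$ (the coordinates of any one of them) and their colors lie in $p_1''\gd(p_0,p_1,a)$. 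The hyperplane clause is the same with $z_k+y$ replaced by $z_k-y$ plus visibility of the associated parallel hyperplanes, again confining the relevant old $y$'s to a single coset of $F^n$; the clauses of $r\leq p_1$ are symmetric. This proves the claim, so $r(z_k)$ can be chosen. When the recursion ends, $r$ is a total $\Gamma_n$-coloring of $\supp(r)^n$ --- rectangles inside the old part are disarmed by $(a)$, and any other rectangle was disarmed when its last new vertex was colored --- it extends $p_0\cup p_1$, it satisfies all clauses of $r\leq p_0$ and $r\leq p_1$ by construction, and $x_0\in\supp(r)^n$, so $r$ witnesses $(2)$.

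I expect the case analysis behind $(\Gamma)$ --- checking in every support configuration that the vertex of a rectangle opposite a new point is already separated from it by a sphere or parallel hyperplanes visible from one of the supports --- to be the main obstacle; everything else is bookkeeping once the use of the non-countable generation of $I$ is in place.
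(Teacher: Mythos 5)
Your proof is correct and follows essentially the same strategy as the paper's: pick a set $B\in I$ that, by non-countable generation of $I$, escapes all finite unions of the countably many $I$-sets $p_1''\gd(p_0,p_1,a)$, $p_0''\gd(p_1,p_0,a)$, then color the new points from $B$ while dodging finitely many singletons and finitely many such $\gd$-sets at each step. The paper packages the geometric case analysis you carry out by hand (hypersphere, hyperplane, and rectangle constraints, each shown to confine the relevant old $y$'s to one coset of $F^n$ and hence to one $\gd$-set) into a single auxiliary set $\ga(x,i)$ of points ``mutually algebraic'' with $x$ over the other condition's support, and it requires the coloring on the new points to be injective, which lets it dismiss multi-new-vertex rectangles in one line; your step-by-step recursion handles those via the accumulated singleton constraints instead, but the underlying mechanism is the same.
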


\begin{proof}
(2) implies (1), which in turn implies (3) by Definition~\ref{posetdefinition}. The hard implication is the remaining one: (3) implies (2). Suppose that all items in (3) obtain and $x_0\in\mathbb{R}^n$ is a point. To find a common lower bound of $p_0, p_1$ which contains $x_0$ in its domain, let $F\subset\mathbb{R}$ be a countable real closed field containing $x_0$ as an element and $\supp(p_0), \supp(p_1)$ as subsets. The common lower bound $q$ will be constructed in such a way that $\dom(q)=F^n$. Write $d= F^n\setminus (\dom(p_0)\cup \dom(p_1))$. For every point $x\in d$ and every $i\in 2$, let $\ga(x, i)=\{y\in\dom(p_i)\setminus\dom(p_{1-i})\colon y$ and $x$ are mutually algebraic over $\supp(p_{1-i})$.

\begin{claim}
For each $x\in d$ and $i\in 2$, the set $p''_i\ga(x, i)$ belongs to the ideal $I$.
\end{claim}

\begin{proof}
For definiteness set $i=1$. The set $\ga(x, 1)$ is a subset of $\gd(p_0, p_1, a)$ where $a$ is the set of coordinates of any point in $\ga(x, 1)$. The claim then follows from assumption (3)(d).
\end{proof}

\noindent Now, use the claim to find a set $b\subset\gw$ in the ideal $I$ which cannot be covered by finitely many elements of the form $p_i''\ga(x, i)$ for $x\in d$ and $i\in 2$ and finitely many singletons. Let $q\colon F^n\to\gw$ be a function extending $p_0\cup p_1$ such that $q\restriction d$ is an injection and for every $x\in d$, $q(x)\in b\setminus (p_0''\ga(x, 0)\cup p_1''\ga(x, 1))$. Such a function exists by the choice of the set $b$. I will show that $q\in P_n$ and $q$ is a lower bound of $p_0, p_1$.

To see that $q\in P_n$, let $R\subset\dom(q)$ be a rectangle and work to show that $R$ is not monochromatic. The treatment splits into cases.

\noindent \textbf{Case 1.} $R\subset\dom(p_0)\cup\dom(p_1)$. By the closure properties of the sets $\dom(p_0)$ and $\dom(p_1)$, there are two subcases.

\noindent\textbf{Case 1.1.} $R$ is entirely contained in one of the two conditions. Then $R$ is not monochromatic as both $p_0, p_1$ are $\Gamma_n$-colorings.

\noindent\textbf{Case 1.2.} There are exactly two vertices of $R$ in $\dom(p_0\setminus p_1)$ and exactly two vertices of $R$ in $\dom(p_1\setminus p_0)$. There are again two subcases.

\noindent\textbf{Case 1.2.1}  If the two vertices in $\dom(p_0\setminus p_1)$ are opposite on the rectangle $R$, then they determine a hypersphere visible from $\supp(p_0)$ on which the other two vertices are opposite as well. Then the other two vertices receive distinct $p_1$-colors by assumption (3)(b).

\noindent\textbf{Case 1.2.2.} If the two vertices in $\dom(p_0\setminus p_1)$ are next to each other on the rectangle $R$, then they determine parallel hyperplanes visible from $\supp(p_0)$ on which the other two vertices are opposite as well. Then the other two vertices receive distinct $p_1$-colors by assumption (3)(c).

\noindent \textbf{Case 2.} $R$ contains exactly one vertex in the set $d$; call it $x$. By the closure properties of the sets $\dom(p_0)$ and $\dom(p_1)$, the remaining three vertices of $R$ cannot all belong to the same condition. Thus, there must be two vertices contained in (say) $\dom(p_0)$ and one vertex, call it $y$, in $\dom(p_1\setminus p_0)$. Then $y, x$ are mutually algebraic over $\dom(p_0)$. Thus $y\in\ga(x, 1)$ and $q(x)\neq q(y)$ by the initial assumptions on the function $q$. In conclusion, the rectangle $R$ is not monochromatic.

\noindent\textbf{Case 3.} $R$ contains more than one vertex in the set $d$. Then $R$ is not monochromatic as $q\restriction d$ is an injection.

This shows that $q\in P_n$ holds. I must show that $q\leq p_1$; the proof of $q\leq p_0$ is symmetric. To verify Definition~\ref{posetdefinition} (2), suppose that $S$ is a hypersphere algebraic over $\dom(p_0)$ and $x, y\in\dom(q\setminus p_0)$ are opposite points on $S$. If $x, y\in\dom(p_1)$ then item (3)(b) shows that $q(x)\neq q(y)$. If $x\in d$ and $y\in\dom(p_0)$ (or vice versa) then $y\in \ga(x, 0)$ and $q(x)\neq q(y)$ by the choice of the color $q(x)$. Finally, if $x, y\in d$ then $q(x)\neq q(y)$ as $q\restriction d$ is an injection.

Definition~\ref{posetdefinition} (3) is verified in the same way.  For item (4) of Definition~\ref{posetdefinition}, let $a\subset F$ be a finite set. Let $a'\subset\supp(p_0)$ be a maximal set in $\supp(p_0)$ which is algebraically free over $\supp(p_1)$. Since algebraically free sets over $\supp(p_0)$ form a matroid, $|a'|\leq |a|$ holds, in particular $a'$ is finite. Now, $\gd(q, p_1, a)\subset\gd(p_1, p_0, a')\cup b$, the first set on the right belongs to $I$ by assumption (3)(d), so the whole union belongs to $I$ as required.
\end{proof}

\begin{corollary}
$P_n$ is a Suslin poset.
\end{corollary}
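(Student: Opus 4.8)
The plan is to verify the three standard requirements for a Suslin forcing (as set out in \cite[Section 5.2]{z:geometric}): that $P_n$ is a $\gs$-closed partial order whose underlying set, ordering relation, and incompatibility relation are all analytic when conditions are coded by suitable reals. Properness/$\gs$-closure is already in hand from Proposition~\ref{closedproposition}, so the real content is the definability.

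First I would fix a coding of conditions. A condition $p\in P_n$ is determined by the countable real closed field $\supp(p)\subset\mathbb{R}$ together with the function $p\colon\supp(p)^n\to\gw$; I would code such a pair by a real encoding an enumeration of $\supp(p)$ (say, together with witnesses that it is a real closed field, which is an arithmetic condition on the enumeration and the field operations) and the corresponding coloring as a function on $\gw$. With this coding, ``$x$ is the code of a condition'' asserts: the enumerated set is a countable real closed subfield of $\mathbb{R}$ (a $\Pi^0_2$-type condition on the real, using that real-closedness is first-order), and the encoded function is a total $\Gamma_n$-coloring of $\supp(p)^n$. The latter says: for every rectangle $R$ with all four vertices in $\supp(p)^n$, the four vertices do not all receive the same color. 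Quantifying over rectangles is a quantifier over four points of $\supp(p)^n$, i.e.\ over four natural-number indices into the enumeration, so this is an arithmetic statement about the code. Hence the set of codes of conditions is Borel, in fact arithmetic relative to the code.

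Next, the ordering: given codes for $p_0,p_1$, the relation $p_1\leq p_0$ is the conjunction of items (1)--(4) of Definition~\ref{posetdefinition}. Item (1) is arithmetic (inclusion of the coded functions, and inclusion of the enumerated fields). Items (2) and (3) quantify over hyperspheres, resp.\ pairs of parallel hyperplanes, ``visible from'' (algebraic over) $\supp(p_0)$: such an object is specified by finitely many real parameters that are algebraic over $\supp(p_0)$, hence by a polynomial with coefficients among the enumerated elements of $\supp(p_0)$ together with an index picking out a root; so quantifying over them is again a number quantifier, and the ensuing assertion about the pair of $p_1$-colors of the two relevant points of $\dom(p_1\setminus p_0)$ is arithmetic. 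Item (4) is a $\Pi^1_1$-type clause: ``for every finite $a\subset\supp(p_1)$, $p_1''\gd(p_0,p_1,a)\in I$,'' where the outer quantifier over finite $a$ is a number quantifier, $\gd(p_0,p_1,a)$ is defined via the arithmetic notion ``algebraic over $\supp(p_0)\cup a$,'' and ``$\in I$'' is Borel since $I$ is a Borel ideal (e.g.\ the summable ideal is $F_{\gs\gd}$). So $\leq$ is Borel. Finally, by Proposition~\ref{compproposition} incompatibility of $p_0,p_1$ is the negation of condition (3) there, which is a finite Boolean combination of clauses of exactly the same shape as (2)--(4) above — number quantifiers over geometric configurations and points, followed by arithmetic or Borel-in-$I$ assertions — hence the compatibility relation, and so the incompatibility relation, is Borel.

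The only point requiring any care is clause (4) and its analogues in Proposition~\ref{compproposition}: one must confirm that the map sending a finite index set $a$ to the set of codes of points of $\dom(p_1\setminus p_0)$ algebraic over $\supp(p_0)\cup a$ is arithmetically definable from the two codes, which it is because algebraicity over a countable field given by an enumeration is an arithmetic relation (the witnessing polynomial has coefficients among finitely many enumerated reals), and that membership in the Borel ideal $I$ is uniformly Borel. Since all of these are absolute between the relevant models, the analyticity (indeed Borelness) of the three relations follows, and together with $\gs$-closure from Proposition~\ref{closedproposition} this is exactly the assertion that $P_n$ is a Suslin poset.
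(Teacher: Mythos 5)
Your proposal is correct and follows essentially the same route as the paper: observe that the underlying set and the ordering are Borel directly from Definition~\ref{posetdefinition}, and use Proposition~\ref{compproposition} to get Borelness of the (in)compatibility relation. The paper's own proof is just these two sentences; you fill in the coding and complexity calculations that the paper leaves to the reader. One small point of calibration: $\gs$-closure is not part of the definition of a Suslin poset (it is proved separately in Proposition~\ref{closedproposition} because it is needed for ZF+DC preservation, not for Suslinness), so your opening framing overstates the checklist slightly, but this does not affect the argument you actually give, which correctly verifies the three definability requirements.
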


\begin{proof}
It is clear from Definition~\ref{posetdefinition} that the underlying set and the ordering of the poset $P_n$ are Borel. Proposition~\ref{compproposition} shows that the (in)compatibility relation is Borel as well.
\end{proof}

\begin{corollary}
\label{densecorollary}
$P_n$ forces the union of the generic filter to be a total $\Gamma_n$-coloring.
\end{corollary}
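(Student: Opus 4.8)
The plan is to show two things: first, that the generic filter $G$ yields a function $\bigcup G$ whose domain is all of $\mathbb{R}^n$, and second, that $\bigcup G$ is a $\Gamma_n$-coloring (i.e.\ no rectangle is monochromatic). The second point is essentially automatic from the definition: every condition $p\in P_n$ is itself a $\Gamma_n$-coloring, any rectangle $R\subset\mathbb{R}^n$ has its four vertices in a countable real closed field, and if a condition $p\in G$ has all of $R$ in its domain then $R$ is not monochromatic under $p$, hence not under $\bigcup G$. So the real content is a density argument: for each point $x_0\in\mathbb{R}^n$, the set $D_{x_0}=\{p\in P_n : x_0\in\dom(p)\}$ is dense in $P_n$.

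First I would fix an arbitrary condition $p_0\in P_n$ and a point $x_0\in\mathbb{R}^n$, and produce $q\leq p_0$ with $x_0\in\dom(q)$. The natural move is to invoke Proposition~\ref{compproposition}: take $p_1$ to be a trivial "reference" condition that already has $x_0$ in its domain — for instance, let $F_1\subset\mathbb{R}$ be any countable real closed field containing $x_0$ and the rationals, and let $p_1$ be the constant-zero coloring on $F_1^n$ (this is a $\Gamma_n$-coloring only if we are slightly more careful, so better: let $p_1$ be \emph{any} condition with $x_0\in\dom(p_1)$, which exists since one can color $F_1^n$ by a $\Gamma_n$-coloring — indeed by Ceder/Erd\H os-type results, or just note $F_1^n$ is countable so a greedy coloring using the ideal $I$ works). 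Then I would check condition~(3) of Proposition~\ref{compproposition} holds for $p_0,p_1$ when $p_1$ is chosen generically enough, and conclude $p_0,p_1$ are compatible with a common lower bound $q$ containing $x_0$. Actually the cleanest route is item~(2) of Proposition~\ref{compproposition} applied in reverse: it suffices to exhibit \emph{one} condition $p_1$ with $x_0\in\dom(p_1)$ that is compatible with $p_0$, and then the common lower bound already contains $x_0$.

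The main obstacle is therefore constructing a condition $p_1$ with $x_0$ in its domain that is compatible with the given $p_0$; verifying clauses (3)(a)--(e) of Proposition~\ref{compproposition}. Here I would choose $\supp(p_1)$ to be a countable real closed field containing $\supp(p_0)\cup\{x_0\}$, so that $\dom(p_0)\subseteq\dom(p_1)$; then clauses (3)(b),(3)(c) with subscripts $0,1$ and the "interchanged" clauses become vacuous or symmetric, and clause (3)(d) for the pair as stated reduces to building the coloring $p_1$ on $\dom(p_1)\setminus\dom(p_0)$ so that, for each finite $a\subset\supp(p_1)$, the image of the algebraic points over $\supp(p_0)\cup a$ lies in $I$. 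Since $\dom(p_1)\setminus\dom(p_0)$ is countable and $I$ is a Borel ideal containing all singletons that is not countably generated, one enumerates the new points and the countably many sets $\gd(p_0,p_1,a)$, and greedily assigns colors from a set $b\in I$ not coverable by countably many of the forbidden sets — exactly the bookkeeping already performed inside the proof of Proposition~\ref{compproposition}. Once $p_1\in P_n$ with $p_0\subseteq p_1$ is in hand, $p_1\leq p_0$ directly (all clauses of Definition~\ref{posetdefinition} were arranged), and $p_1$ witnesses density. I expect the write-up to be short, citing Proposition~\ref{compproposition} and reusing its construction, with the only genuine work being the observation that a single extending condition through any prescribed point exists.
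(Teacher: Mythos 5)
Your plan is in the right direction --- density of $D_{x_0}=\{p : x_0\in\dom(p)\}$ plus the observation that each condition is itself a $\Gamma_n$-coloring --- but you overcomplicate the density step and in doing so miss the point of Proposition~\ref{compproposition} being stated the way it is. The paper's proof is a one-liner: apply Proposition~\ref{compproposition} with $p_0=p_1=p$. With that choice $p_0,p_1$ are trivially compatible (the relation $\leq$ is reflexive, since all the nontrivial clauses of Definition~\ref{posetdefinition} quantify over $\dom(p_1\setminus p_0)=\emptyset$), so by the equivalence $(1)\Leftrightarrow(2)$, for every $x_0$ there is a lower bound $q\leq p$ with $x_0\in\dom(q)$. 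No auxiliary reference condition $p_1$, no re-verification of the clauses of item~(3), no repetition of the greedy construction.

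Two further points. First, your middle step is logically redundant: once you have built $p_1$ with $p_0\subseteq p_1$, $x_0\in\dom(p_1)$, and $p_1\leq p_0$, that $p_1$ already witnesses density and there is nothing left for Proposition~\ref{compproposition} to do; invoking compatibility of $p_0$ with a separate $p_1$ adds nothing. Second, and more substantively, your accounting of which clauses become "vacuous or symmetric" is not right. With $p_0\subseteq p_1$, the interchanged clause (3)(e) does become vacuous, but clauses (3)(b) and (3)(c) --- equivalently, clauses (2) and (3) of Definition~\ref{posetdefinition} for the relation $p_1\leq p_0$ --- do not: they assert that two new points opposite on a hypersphere or pair of hyperplanes visible from $\supp(p_0)$ must receive different colors, which is a genuine constraint on how you assign colors to new points. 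The paper's construction secures this by making the coloring injective on the set $d$ of new points; you never state that requirement, and your description makes it sound as if only the ideal clause (3)(d) needs attention. You do say you would reuse "exactly the bookkeeping already performed inside the proof of Proposition~\ref{compproposition}", which rescues the plan, but only because the paper's construction happens to do more than your sketch of it acknowledges.
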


\begin{proof}
By a genericity argument, it is enough to show that for every condition $p\in P_n$ and every point $x_0\in\mathbb{R}^n$ there is a stronger condition containing $x_0$ in its domain. This follows from Proposition~\ref{compproposition} with $p=p_0=p_1$.
\end{proof}

\noindent It is time for the balance proof. It uses the following general fact.

\begin{proposition}
\label{subsetfact}
Let $V[G_0]$, $V[G_1]$ be mutually Noetherian extensions.

\begin{enumerate}
\item Let $n_0\in\gw$ be a number and $A\subset\mathbb{R}^{n_0}$ be a set algebraic over $V[G_1]$. Suppose that $\bar x_0\in V[G_0]\cap \mathbb{R}^{n_0}$ is a point in $A$. Then there is a set $B\subseteq A$ algebraic over $V$ such that $\bar x_0\in B$;
\item same as (1) except for semialgebraic sets;
\item if $a\subset \mathbb{R}\cap V[G_1]$ is a finite set and $r\in\mathbb{R}\cap V[G_1]$ is a real algebraic over $(\mathbb{R}\cap V[G_0])\cup a$, then $r$ is algebraic over $(\mathbb{R}\cap V)\cup a$;
\item $\mathbb{R}\cap V[G_0]\cap V[G_1]=\mathbb{R}\cap V$.
\end{enumerate}
\end{proposition}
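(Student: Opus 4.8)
The plan is to extract everything from item (1), which I would prove by exhibiting a concrete Noetherian subbasis. Let $Y$ be the Polish space of real polynomials in $n_0$ variables, coded by a degree and a tuple of coefficients, and set $C=\{\langle q,\bar x\rangle\colon q(\bar x)=0\}\subseteq Y\times\mathbb{R}^{n_0}$. This $C$ is closed, and it is a Noetherian subbasis: a finite intersection $\bigcap_{q\in a}C_q$ is exactly the real algebraic set cut out by the finitely many polynomials in $a$, and by the Hilbert basis theorem an ascending chain of (real radical) ideals in $\mathbb{R}[\bar x]$ must stabilize, so there is no infinite strictly descending chain of such sets. Given $A$ algebraic over $V[G_1]$ with $\bar x_0\in A\cap V[G_0]$, fix a polynomial $p$ over $\mathbb{R}\cap V[G_1]$ (a point of $Y$ coded in $V[G_1]$) with $A=C_p$. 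Since $\bar x_0\in C_p$, the set $B:=C(V[G_1],\{\bar x_0\})$ satisfies $B\subseteq C_p=A$, and trivially $\bar x_0\in B$. Mutual Noetherianity, applied to $\{\bar x_0\}\in V[G_0]$, gives $B=C(V,\{\bar x_0\})$; as noted in the excerpt this is a finite subintersection of sets $C_q$ with $q$ coded in $V$, hence the zero set of a sum of squares of such polynomials (a sum of squares of reals vanishes iff each summand does), i.e. algebraic over $V$. This $B$ proves (1).

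Items (4) and (3) are then short. For (4): if $x$ is a real in $V[G_0]\cap V[G_1]$, then $\{x\}$ is algebraic over $V[G_1]$ (the zero set of $t-x$), so (1) yields $B$ algebraic over $V$ with $x\in B\subseteq\{x\}$; since $B\neq\mathbb{R}$, its defining polynomial over $\mathbb{R}\cap V$ is nonzero, so $x$ is algebraic over the real closed field $\mathbb{R}\cap V$ and hence lies in it. For (3): given $r\in\mathbb{R}\cap V[G_1]$ algebraic over $(\mathbb{R}\cap V[G_0])\cup a$, fix a tuple $\bar e\in(\mathbb{R}\cap V[G_0])^k$ and an integer polynomial $P(\bar\alpha,\bar u,v)$ with $P(a,\bar e,r)=0$ while $v\mapsto P(a,\bar e,v)$ is not identically zero. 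The set $S$ of tuples $\bar u$ for which $P(a,\bar u,r)=0$ and $v\mapsto P(a,\bar u,v)\not\equiv 0$ is semialgebraic over $\mathbb{R}\cap V[G_1]$ and contains $\bar e\in V[G_0]$, so by (2) it contains a nonempty set semialgebraic over $V$, which by definable choice in real closed fields contains a point $\bar e'$ with coordinates in $\mathbb{R}\cap V$; then $P(a,\bar e',v)$ is a nonzero polynomial over the field generated by $(\mathbb{R}\cap V)\cup a$ having $r$ as a root.

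The real content is (2), which I would prove by induction on $d=\dim A$ via semialgebraic cell decomposition over $\mathbb{R}\cap V[G_1]$ together with localization by rational boxes. If $d=0$, then $\bar x_0$ is an isolated point of an $(\mathbb{R}\cap V[G_1])$-semialgebraic set, hence lies in $\mathbb{R}\cap V[G_1]$, so by (4) in $\mathbb{R}\cap V$; take $B=\{\bar x_0\}$. If $d>0$, decompose $A$ into cells and let $C$ be the cell containing $\bar x_0$. If $\dim C<d$, apply the inductive hypothesis to $C\subseteq A$. Otherwise $\dim C=d$; let $Z$ be the Zariski closure of $C$ (algebraic over $\mathbb{R}\cap V[G_1]$, of dimension $d$), use (1) to get $B_0$ algebraic over $V$ with $\bar x_0\in B_0\subseteq Z$, and set $A_1=B_0\cap A$, semialgebraic over $\mathbb{R}\cap V[G_1]$ with $\bar x_0\in A_1\subseteq A$. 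If $\dim A_1$ or the local dimension of $B_0$ at $\bar x_0$ is below $d$, intersect with a small rational box around $\bar x_0$ and invoke the inductive hypothesis. Otherwise $A_1$ is a relative neighborhood of $\bar x_0$ in $B_0$, so $A_1\supseteq B_0\cap B(\bar x_0,\delta)$ for some $\delta>0$; choosing a rational box $\beta$ with $\bar x_0\in\beta\subseteq B(\bar x_0,\delta)$, the set $B=B_0\cap\beta$ is semialgebraic over $V$ and satisfies $\bar x_0\in B\subseteq A_1\subseteq A$.

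The step I expect to be the main obstacle is the dimension bookkeeping in (2): one must check that each set fed back into the induction — the cell $C$, the relative boundary $A_1\setminus\mathrm{int}_{B_0}(A_1)$, and the localized pieces $A_1\cap\beta$ — genuinely has dimension strictly less than $d$, which rests on the standard dimension theory of semialgebraic sets (a full-dimensional semialgebraic subset of $\mathbb{R}^m$ has nonempty interior; closure, interior, local dimension and cell decomposition do not enlarge the field of definition). Everything genuinely new is in (1): real algebraic varieties form a Noetherian subbasis, so that mutual Noetherianity of $V[G_0]$ and $V[G_1]$ forces the smallest $V[G_1]$-variety through a $V[G_0]$-point to be defined already over $V$; the remaining items are then algebra and semialgebraic geometry.
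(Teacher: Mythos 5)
Your treatment of items (1) and (4) matches the paper's in substance: you build a Noetherian subbasis out of zero sets of polynomials, invoke mutual Noetherianity on $C(V[G_1],\{\bar x_0\})=C(V,\{\bar x_0\})$, and then read (4) off from (1) by applying it to the singleton $\{x\}$. (The paper indexes by a coefficient tuple for a fixed integer polynomial form rather than by the whole space of polynomials, but the content is identical.) Item (3) is also close in spirit: you move inside a semialgebraic set of parameter tuples and pull a witness down to $V$ by definable choice, whereas the paper stays with the algebraic set $\{\bar y_0\colon\phi(\bar y_0,\bar x_1,r)=0\}$, uses (1) plus an open box to keep the polynomial nonzero, and uses Mostowski absoluteness to find a ground model tuple. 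The small cost of your variant is that (3) now depends on (2), which the paper avoids.

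The real divergence is item (2), and I think your route has a genuine gap. After fixing a cell $C$ of dimension $d$ through $\bar x_0$, you take $Z$ to be its Zariski closure, get $B_0\subseteq Z$ from (1), set $A_1=B_0\cap A$, and in the residual case (``$\dim A_1=d$ and $\dim_{\bar x_0}B_0=d$'') you assert that $A_1$ is a relative neighborhood of $\bar x_0$ in $B_0$. That assertion is not justified and is false as stated: if $\bar x_0$ is a singular point of $B_0$ (a node, say) and the semialgebraic set $A$ contains only one local branch of $B_0$, then $B_0\cap B(\bar x_0,\delta)\not\subseteq A$ for any $\delta>0$ even though both dimensions equal $d$. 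Concretely, take $A$ to be the image of $t\in(-1,2)$ under $t\mapsto(t^2-1,t(t^2-1))$ — one branch of the nodal cubic through the origin — and $\bar x_0=(0,0)$: then $Z$ is the whole cubic, $B_0$ can be $Z$, $\dim A_1=\dim_{\bar x_0}B_0=1$, yet $A_1$ is not a neighborhood of $\bar x_0$ in $B_0$. One can try to repair this by always taking $B_0=C(V,\{\bar x_0\})$ minimal and arguing that minimality forces $\bar x_0$ to be a smooth point of $B_0$ (otherwise the singular locus would be a smaller $V$-algebraic set through $\bar x_0$) and then that $\bar x_0$ must also be smooth on $Z$, but none of this is in your write-up, and it would drag in a fair amount of real algebraic geometry (singular loci of real algebraic sets, local dimension, etc.).

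The paper's proof of (2) sidesteps all of this. After quantifier elimination write $A$ via a quantifier-free $\theta$ whose atomic parts compare polynomials $\phi_i$ ($i\in m$) with $0$; let $a=\{i\colon\phi_i(\bar x_0)=0\}$. Choose a rational open box $O$ around $\bar x_0$ on which the $\phi_i$ with $i\notin a$ keep a constant sign (possible since they are nonzero at $\bar x_0$). Apply (1) to the common zero set of $\{\phi_i\colon i\in a\}$ — an algebraic set over $V[G_1]$ containing $\bar x_0$ — to get $C$ algebraic over $V$ with $\bar x_0\in C$ and $\phi_i\equiv 0$ on $C$ for $i\in a$. Then $B=C\cap O$ is semialgebraic over $V$ and is contained in $A$, because every atomic subformula of $\theta$ has the same truth value on $B$ as at $\bar x_0$. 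No cell decomposition, no Zariski closures, no dimension induction; the only real input is (1) plus the elementary observation that a rational box is coded in $V$. I'd recommend replacing your inductive argument for (2) with this.
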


\begin{proof}
For (1), let $n_1\in\gw$ be a number and $\phi(\bar v_0, \bar v_1)$ be a polynomial with integer coefficients and $n_0+n_1$ many free variables, and let $\bar x_1\in V[G_1]$ be an $n_1$-tuple of real numbers such that $A=\{\bar y\in\mathbb{R}^{n_0}\colon\phi(\bar y_, \bar x_1)=0\}$. Let $C\subset \mathbb{R}^{n_1}\times\mathbb{R}^{n_0}$ be the set of all pairs $\langle \bar y_1, \bar y_0\rangle$ such that $\phi(\bar y_0, \bar y_1)=0$. This is a Noetherian subbasis by the Hilbert Basis Theorem.
Since $C(V[G_1], \{\bar x_0\})\subseteq A=C_{\bar x_1}$ holds by the definitions and $C(V[G_1], \{\bar x_0\})=C(V, \{\bar x_0\})$ holds by the initial assumption on the generic extensions, (1) is witnessed by $B=C(V, \{\bar x_0\})$.

For (2), let $A\subset\mathbb{R}^{n_0}$ be a set semialgebraic over $V[G_1]$. By the elimination of quantifiers for real closed fields \cite[Section 3.3]{marker:book}, $A$ is defined by a quantifier free formula $\theta$ with parameters in $V[G_1]$. The formula can be rearranged so that its atomic subformulas compare a value of a polynomial with zero. Let $\{\phi_i\colon i\in m\}$ be a list of all polynomials mentioned in $\theta$. Let $\bar x_0\in V[G_0]\cap \mathbb{R}^{n_0}$ be a point in $A$. Let $a\subset m$ be the set of all $i$ such that $\phi_i(\bar x_0)=0$.
Let $O\subset\mathbb{R}^{n_0}$ be a rational open box around $\bar x_0$ in which the polynomials $\phi_i$ for $i\notin a$ do not change sign. Use (1) to find a set $C$ algebraic over $V$ such that $\bar x_0\in C$ and for all $\bar y\in C$ and all $i\in a$, $\phi_i(\bar y)=0$. It is clear that the set $B=C\cap O\subseteq A$ works as required in (2).

For (3), let $\phi(\bar x_0, \bar x_1, v)$ be a nonzero polynomial with parameters $\bar x_0$ in $V[G_0]$ and $\bar x_1\in a$ and a free variable $v$ such that $\phi(\bar x_0, \bar x_1, r)=0$ holds. There is an open neighborhood $O$ of $\bar x_0$ such that for every $\bar x'_0\in O$, the polynomial $\phi(\bar x'_0, \bar x_1, v)$ remains non-zero. Let $A=\{\bar y_0\colon \phi(\bar y_0, \bar x_1, r)=0\}$, use (1) to find a set $B\subseteq A$ algebraic over $V$ such that $\bar x_0\in B$, and use a Mostowski absoluteness argument to find a tuple $\bar x'_0\in O\cap B$ in the ground model. (3) is then witnessed by the tuple $x'_0$. Finally, (4) immediately follows from (1).
\end{proof}

\begin{theorem}
\label{dimensionbalance}
Let $n\geq 2$ be a number. In the poset $P_n$,

\begin{enumerate}
\item for every total $\Gamma_n$-coloring $c\colon \mathbb{R}^n\to\gw$, the pair $\langle \coll(\gw, \mathbb{R}), \check c\rangle$ is $3, 2$-Noetherian balanced;
\item if the Continuum Hypothesis holds then the poset $P_n$ is $3, 2$-Noetherian balanced.
\end{enumerate}
\end{theorem}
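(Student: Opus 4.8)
The plan is to prove (1) first, and then derive (2) as a corollary via a standard master-condition argument. For (1), fix a total $\Gamma_n$-coloring $c$. Since $\coll(\gw, \mathbb{R})\Vdash\check c\in\dot P_n$ is clear (the coloring $c$ can be approximated by its restrictions to countable real closed subfields, which are conditions below any given one in a directed way), the work is in verifying the amalgamation clause. So let $\langle V[G_i]\colon i\in 3\rangle$ be pairwise mutually Noetherian extensions, let $H_i$ be $\coll(\gw,\mathbb{R})$-generic over $V[G_i]$ living in $V[G_i]$, and let $p_i\leq c/H_i$ be conditions in $P_n$ computed in $V[G_i]$; I must produce a common lower bound in the true universe. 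Note $c/H_i$ is just $c$ restricted to the reals of $V$ (more precisely to $(\mathbb{R}^V)^n$), so each $p_i$ extends $c\restriction(\mathbb{R}\cap V)^n$ and, crucially, the clauses (2)--(4) of Definition~\ref{posetdefinition} of ``$p_i\leq c\restriction(\mathbb{R}\cap V)^n$'' hold with respect to spheres and hyperplanes algebraic/visible over $\mathbb{R}\cap V$.

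The candidate lower bound is the function $q$ with $\dom(q)=F^n$ where $F$ is a countable real closed field containing $\supp(p_0)\cup\supp(p_1)\cup\supp(p_2)$, defined by $q=p_0\cup p_1\cup p_2$ on $\dom(p_0)\cup\dom(p_1)\cup\dom(p_2)$, and on the remaining set $d=F^n\setminus\bigcup_i\dom(p_i)$ taking $q\restriction d$ to be an injection into a carefully chosen set $b\in I$ avoiding certain small sets, exactly as in the proof of Proposition~\ref{compproposition} but now with three pieces instead of two. Here is where Proposition~\ref{subsetfact} does the heavy lifting: to check that $q$ is a $\Gamma_n$-coloring, consider a rectangle $R\subset F^n$; if $R$ meets $d$ in two or more vertices it is non-monochromatic by injectivity, if in exactly one vertex $x$ then (by closure of the domains) the other three vertices cannot lie in a single $\dom(p_i)$, so some $\dom(p_i\setminus(\dom(p_j)\cup\dom(p_k)))$ contains a vertex $y$ mutually algebraic with $x$ over the other two fields, and this $y$ lands in the relevant ``$\ga(x,i)$''-type set which is in $I$ by Proposition~\ref{subsetfact}(3) applied across the mutually Noetherian models, so $q(x)\neq q(y)$ by the choice of $b$; and if $R\subseteq\bigcup_i\dom(p_i)$ the genuinely new case (compared with Proposition~\ref{compproposition}) is when $R$ has vertices in all three pieces, i.e. two vertices in one $\dom(p_i\setminus(\dom(p_j)\cup\dom(p_k)))$ and one each in the other two. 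In that configuration the two vertices in the $i$-th piece are either opposite or adjacent on $R$; if opposite they lie on a hypersphere which is algebraic over $\supp(p_i)$ but, by Proposition~\ref{subsetfact}(1)--(2), also over $\mathbb{R}\cap V$, and then clause (2) of $p_i\leq c\restriction(\mathbb{R}\cap V)^n$ forces the other two vertices to get distinct $p_i$-colors --- wait, the other two vertices are not in $\dom(p_i)$; instead one uses that the two $i$-piece vertices, being opposite on $R$, are themselves the two relevant points, and applies clause (2) of $p_j\leq c\restriction(\mathbb{R}\cap V)^n$ or $p_k\leq\cdots$ to whichever pair of opposite vertices does lie in a single $\dom(p_\ell\setminus\dom(\text{others}))$; a short case analysis on the four vertices of the rectangle, grouped by which of the three conditions they belong to, shows that in every sub-case there is a pair of opposite or adjacent vertices sitting inside one $\dom(p_\ell)\setminus\bigcup_{m\neq\ell}\dom(p_m)$ lying on a sphere or parallel hyperplanes that, by Proposition~\ref{subsetfact}, descend to be algebraic/visible over $\mathbb{R}\cap V$, so the corresponding clause of $p_\ell\leq c\restriction(\mathbb{R}\cap V)^n$ applies. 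Then $q\leq p_i$ for each $i$ is checked exactly as in Proposition~\ref{compproposition}, using Proposition~\ref{subsetfact}(3) and the matroid/dimension-counting bound on algebraically free sets for clause (4).

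For (2), assume CH and let $p\in P_n$; I must find a $3,2$-Noetherian balanced pair below $p$. Under CH one can enumerate $\mathbb{R}$ in order type $\gw_1$ and build, by a $\gs$-closure bookkeeping using Proposition~\ref{closedproposition}, a single total $\Gamma_n$-coloring $c$ extending $p$ (i.e. with $c\restriction\supp(p)^n=p$); then by (1) the pair $\langle\coll(\gw,\mathbb{R}),\check c\rangle$ is $3,2$-Noetherian balanced, and since $\coll(\gw,\mathbb{R})\Vdash\check c\leq\check p$ this is the required pair. The main obstacle is the combinatorics in the trickiest sub-case of Case~1 above --- the rectangle with vertices spread over all three conditions --- where one must be sure that for every way of distributing four vertices among three fields there really is a coppositely-or-adjacently-placed pair lying inside a single field-difference, so that some clause of the ``$p_i\leq$ restriction-of-$c$'' relation bites; this is a finite but slightly delicate incidence-geometry argument, and it is the only place where the specific shape of rectangles (as opposed to arbitrary $4$-point configurations) and the three-fold mutual Noetherian-ness are both essential.
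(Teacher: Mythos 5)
Your overall strategy matches the paper's: amalgamate by taking the union of the three conditions on their joint domain, filling in the complement $d$ injectively with colors from a set $b\in I$ that dodges a small ``forbidden'' set attached to each new point, then running a case analysis on how the four vertices of a rectangle distribute over $\dom(p_0),\dom(p_1),\dom(p_2),d$. Item (2) from (1) via a CH recursion is also exactly the paper's route. But there are two places where your sketch papers over the actual difficulty, and in one of them your proposed resolution is wrong.

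First, the subcase of Case 1 where the rectangle $R\subset\bigcup_i\dom(p_i)$ has vertices in all three pieces (two in, say, $\dom(p_i\setminus c)$ and one each in $\dom(p_j\setminus c)$ and $\dom(p_k\setminus c)$). You propose to locate an opposite-or-adjacent pair inside a single $\dom(p_\ell)\setminus\bigcup_{m\neq\ell}\dom(p_m)$ whose witnessing hypersphere or parallel-hyperplane pair descends to be algebraic over $\mathbb{R}\cap V$, and then invoke clause (2) or (3) of $p_\ell\leq c$. That plan does not go through: the only pair lying inside a single field-difference is the $p_i$-pair, and the sphere (or parallel hyperplanes) they determine is algebraic over $\supp(p_i)$ but has no reason to be algebraic over $\mathbb{R}\cap V$; Proposition~\ref{subsetfact}(4) only drops parameters to $V$ when they are simultaneously in two of the mutually Noetherian models, which is not the situation here. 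The paper instead shows this configuration is \emph{geometrically impossible}: from the $p_i$-pair one produces, via Proposition~\ref{subsetfact}(1), ground-model-algebraic sets $Q'_j\subseteq Q_j$ and $Q'_k\subseteq Q_k$ (or $S_j,S_k\subseteq S$ in the opposite subcase) still containing $x_j$ and $x_k$, and then $x_k$ is recoverable inside $V[G_j]$ as the nearest/farthest point of $Q'_k$ (resp. $S_k$) to $x_j$, contradicting $\mathbb{R}\cap V[G_j]\cap V[G_k]=\mathbb{R}\cap V$. No coloring clause is used; the conclusion is that $a$ simply cannot have size three when $R\cap d=\emptyset$.

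Second, the subcase of Case 2 where $R$ has exactly one vertex $x\in d$ and one vertex $x_\ell\in\dom(p_\ell\setminus c)$ for each $\ell\in 3$. You fold this into an ``$\alpha(x,i)$-type'' set detected via Proposition~\ref{subsetfact}(3), but that set is not of $\alpha$-type: there is no single hypersphere or hyperplane pair algebraic over one $\supp(p_j)$ linking $x$ and $x_i$. The paper introduces a third forbidden set $\gamma(x,i,j)$ (points $y\in\dom(p_i\setminus c)$ completing a rectangle with $x$ and witnesses from the other two models), and proving that $p_i''\gamma(x,i,j)\in I$ is, in the paper's own words, ``the heart of the whole construction and the reason why item (4) appears in Definition~\ref{posetdefinition}.'' The proof is a two-step semialgebraic descent: one first applies Proposition~\ref{subsetfact}(2) between $V[G_i]$ and $V[G_j]$ to pin down a set $B$ algebraic over $V$, and then applies it again between $V[G_i]$ and $V[G_k]$ to a further-cut set $C$ so as to obtain a singleton fibre, concluding that $y$ is algebraic over $(\mathbb{R}\cap V)\cup a$ for a fixed finite $a$ and hence that the image set lies in $I$ via clause (4) of $p_i\leq c$. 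Your sketch does not contain this idea, and without it the set of colors you need to forbid at $x$ is not known to lie in $I$, so the choice of $b$ cannot be made.

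So the framework is right and item (2) is fine, but the two genuinely hard cases --- impossibility of the three-way split when $R\cap d=\emptyset$, and the $\gamma$-claim driving the forbidden-color bound when $R\cap d$ is a singleton spread over all three conditions --- are exactly where the proposal is either incorrect or missing the key argument.
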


\noindent The fine details of this proof are the reason behind the rather mysterious Definition~\ref{posetdefinition}. 

\begin{proof}
For item (1), let $c\colon\mathbb{R}^n\to\gw$ be a total $\Gamma_n$-coloring. Let $V[G_i]$ for $i\in 3$ be pairwise mutually Noetherian extensions of $V$. Suppose that $p_i\leq c$ is a condition in $P_n$ in the model $V[G_i]$ for each $i\in 3$; I must find a common lower bound of all $p_i$ for $i\in 3$.

Work in the model $V[G_i\colon i\in 3]$. Let $F\subset\mathbb{R}$ be a countable real closed field containing $\supp(p_i)$ for $i\in 3$. I will construct a lower bound $q$ such that $F=\supp(q)$. Write $d=F^n\setminus\bigcup_i\dom(p_i)$. For each point $x\in d$ and for each pair $i, j\in 3$ of distinct indices, define sets $\ga(x, i, j), \gb(x, i, j)$ and $\gg(x, i, j)\subset\dom(p_i)$ as follows:

\begin{itemize}
\item $\ga(x, i, j)=\{y\in\dom(p_i\setminus c)\colon$ there is a hypersphere $S\subset\mathbb{R}^n$ algebraic over $\supp(p_j)$ such that $x, y$ are opposite points on $S\}$; 
\item $\gb(x, i, j)=\{y\in\dom(p_i\setminus c)\colon$ there are parallel hyperplanes $P, Q\subset\mathbb{R}^n$ algebraic over $\supp(p_j)$ such that $x, y$ are opposite points on $P, Q$ respectively$\}$;
\item $\gg(x, i, j)=\{y\in \dom(p_i\setminus c)\colon$ there are points $x_j\in \dom(p_j\setminus c)$ and $x_k\in\dom(p_k\setminus c)$ such that $x, y, x_j, x_k$ are four vertices of a rectangle listed in a clockwise or counterclockwise order$\}$. Here $k\in 3$ is the index distinct from $i$ and $j$.
\end{itemize}

\begin{claim}
There is a finite set $a\subset\supp(p_i)$ such that $\ga(x, i, j)$ consists of points algebraic over $(\mathbb{R}\cap V)\cup a$.
\end{claim}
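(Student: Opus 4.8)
The plan is to read the finite set $a$ off from a single point of $\ga(x,i,j)$. If $\ga(x,i,j)=\emptyset$, put $a=\emptyset$; otherwise fix an arbitrary $y_0\in\ga(x,i,j)$ and let $a$ be the set of its $n$ coordinates. Since $y_0\in\dom(p_i)=\supp(p_i)^n$, this $a$ is a finite subset of $\supp(p_i)$, as demanded. It then remains to show that every $y\in\ga(x,i,j)$ is algebraic over $(\mathbb{R}\cap V)\cup a$, and I will in fact establish the stronger statement that $y-y_0\in(\mathbb{R}\cap V)^n$, from which algebraicity is immediate because then each coordinate of $y$ lies in the subfield of $\mathbb{R}$ generated by $(\mathbb{R}\cap V)\cup a$.

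First I would record an elementary geometric fact: a hypersphere $S\subset\mathbb{R}^n$ that is algebraic over a real closed subfield $K\subset\mathbb{R}$ has its center in $K^n$. Indeed, the center is the unique solution of the first-order formula $\exists t>0\ \forall\bar w\,(p(\bar w)=0\leftrightarrow\sum_k(w_k-v_k)^2=t)$ in the language of ordered fields, whose parameters are the coefficients of a polynomial $p$ defining $S$; these parameters lie in $K$, and $K$ is an elementary submodel of $\mathbb{R}$ by model completeness of the theory of real closed fields, so the unique realizing tuple lies in $K^n$. (If one prefers to avoid model completeness, one can instead note that the ideal of $S$ in $K[\bar w]$ is real and of height one, hence principal, and recover the canonical defining polynomial $\sum_k(w_k-z_k)^2-\rho^2$ up to a scalar in $K$.)

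Now fix $y\in\ga(x,i,j)$ and let $S,S_0\subset\mathbb{R}^n$ be hyperspheres algebraic over $\supp(p_j)$ witnessing, respectively, that $x,y$ are antipodal on $S$ and that $x,y_0$ are antipodal on $S_0$. By the recorded fact the centers of $S$ and $S_0$ lie in $\supp(p_j)^n$, and since antipodal points have the center of their sphere as their midpoint, those centers are $(x+y)/2$ and $(x+y_0)/2$. Subtracting, $(y-y_0)/2\in\supp(p_j)^n\subseteq(\mathbb{R}\cap V[G_j])^n$ — note that the one uncontrolled point $x$, lying in none of the $\dom(p_i)$, simply cancels, which is exactly why a single $y_0$ suffices. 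On the other hand $y,y_0\in\dom(p_i)=\supp(p_i)^n\subseteq(\mathbb{R}\cap V[G_i])^n$, so $(y-y_0)/2\in(\mathbb{R}\cap V[G_i])^n$ as well. Since $V[G_i]$ and $V[G_j]$ are mutually Noetherian, Proposition~\ref{subsetfact}(4) gives $\mathbb{R}\cap V[G_i]\cap V[G_j]=\mathbb{R}\cap V$, hence each coordinate of $y-y_0$ lies in $\mathbb{R}\cap V$, and we are done.

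I do not expect a genuine obstacle in this claim: the only step needing care is the geometric fact about centers, which is a routine consequence of $\supp(p_j)$ being real closed, and the rest is the cancellation trick plus one application of Proposition~\ref{subsetfact}(4). The companion claim for $\gb(x,i,j)$ should go through by the identical scheme, replacing the sphere by the two parallel hyperplanes so that once again $y-y_0$ is forced into $\supp(p_j)^n$; the real work lies one step later, in the analysis of $\gg(x,i,j)$, which couples all three models simultaneously and will have to invoke the full pairwise mutual-Noetherian hypothesis rather than just the $\{i,j\}$ instance used here.
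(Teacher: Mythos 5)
Your proof is correct. The core idea — anchor on a single point $y_0$ of $\ga(x,i,j)$ so that the uncontrolled point $x$ cancels — is exactly the paper's, but you execute it differently and actually obtain a stronger conclusion. The paper chooses $y\in\ga(x,i,j)$, observes that any other $z\in\ga(x,i,j)$ is algebraic over $\supp(p_j)\cup y$ (derive $x$ from $y$ via the sphere $S_y$, then $z$ from $x$ via $S_z$), and then invokes Proposition~\ref{subsetfact}(3) to downgrade the parameters from $\supp(p_j)$ to $\mathbb{R}\cap V$. You instead make explicit the geometric content buried in the paper's ``derive $x$ from $y$'' step — namely that the center of a hypersphere algebraic over a real closed subfield $K$ lies in $K^n$, which you correctly establish by elementarity of $K\preceq\mathbb{R}$ and uniqueness of the center — and observe that the centers are the midpoints $(x+y)/2$ and $(x+y_0)/2$, so the difference $(y-y_0)/2$ lands simultaneously in $\supp(p_j)^n$ and in $\supp(p_i)^n$. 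This lets you appeal only to the simpler Proposition~\ref{subsetfact}(4) (intersection of models) rather than (3) (algebraicity transfer), and yields the cleaner conclusion that $y-y_0\in(\mathbb{R}\cap V)^n$, i.e.\ $y$ lies in the field generated by $(\mathbb{R}\cap V)\cup a$ rather than merely being algebraic over it. Your closing remarks about $\gb$ and $\gg$ are also accurate: the $\gb$ claim does go through by the same midpoint/translation cancellation, and $\gg$ is where the genuine three-model coupling and Definition~\ref{posetdefinition}(4) are really used.
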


\begin{proof}
This is clear if $\ga(x, i, j)=0$. Otherwise, let $y\in\ga(x, i, j)$ be any point and argue that all other points in $\ga(x, i, j)$ are algebraic over $(\mathbb{R}\cap V)\cup y$. To see this, suppose that $z\in\ga(x, i, j)$ is any other point. Let $S_y, S_z$ be hyperspheres algebraic in $\supp(p_j)$ such that $x$ is opposite of $y$ on $S_y$ and opposite of $z$ on $S_z$. It follows that $z$ is algebraic over $\supp(p_j)\cup y$: one first derives $x$ from $y$ and then $z$ from $x$. By Fact~\ref{subsetfact} $z$ is algebraic over $(\mathbb{R}\cap V)\cup y$ as desired.
\end{proof}

\begin{claim}
There is a finite set $a\subset\supp(p_i)$ such that $\gb(x, i, j)$ consists of points algebraic over $(\mathbb{R}\cap V)\cup a$.
\end{claim}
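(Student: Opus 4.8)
The plan is to mimic the proof of the preceding claim, replacing the role played there by hyperspheres and their centers with parallel hyperplanes and the displacement vector between them. If $\gb(x,i,j)$ is empty there is nothing to do, so I fix an arbitrary point $y\in\gb(x,i,j)$ and let $a\subset\supp(p_i)$ be its (finite) tuple of coordinates --- this is legitimate since $\dom(p_i)=\supp(p_i)^n$. The goal is then to show that every $z\in\gb(x,i,j)$ is algebraic over $(\mathbb{R}\cap V)\cup a$.

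The one genuinely new ingredient --- and the step I expect to need the most care, though it is still elementary --- is the following replacement for the fact that the center of a hypersphere algebraic over $F$ has coordinates in $F$: if $P,Q\subset\mathbb{R}^n$ are distinct parallel hyperplanes algebraic over a real closed field $F\subset\mathbb{R}$, then the vector $v(P,Q)$ perpendicular to $P$ that translates $P$ onto $Q$ has all of its coordinates in $F$. To see this, present $P$ as the zero set of a degree-one polynomial with coefficients in $F$; the normal direction then has coordinates that are ratios of these coefficients, hence in $F$, and normalizing it to unit length costs a single square root, which remains in $F$ precisely because $F$ is real closed; the signed distance from $P$ to $Q$ along this unit normal is again a ratio of elements of $F$, so $v(P,Q)\in F^n$.

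With this in hand the argument is a transcription of the previous claim. Since $x,y$ are opposite points on some parallel hyperplanes $P_y,Q_y$ algebraic over $\supp(p_j)$, we have $x=y-v(P_y,Q_y)$ with $v(P_y,Q_y)\in\supp(p_j)^n$, so $x$ is algebraic over $\supp(p_j)\cup y$; and for an arbitrary $z\in\gb(x,i,j)$, witnessed by hyperplanes $P_z,Q_z$, we get $z=x+v(P_z,Q_z)$, whence $z$ is algebraic over $\supp(p_j)\cup x$ and therefore over $\supp(p_j)\cup y$ --- first derive $x$ from $y$, then $z$ from $x$, exactly as before. Finally, the coordinates of $y$ and $z$ lie in $\mathbb{R}\cap V[G_i]$, the field $\supp(p_j)$ lies in $\mathbb{R}\cap V[G_j]$, and $V[G_i],V[G_j]$ are mutually Noetherian, so Proposition~\ref{subsetfact}(3) upgrades ``$z$ algebraic over $\supp(p_j)\cup y$'' to ``$z$ algebraic over $(\mathbb{R}\cap V)\cup a$'', which is what the claim asserts.
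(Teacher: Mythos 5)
Your proof is correct and is precisely the ``parallel to the previous argument'' route the paper intends: you replace the hypersphere's center and radius by the perpendicular displacement vector between the parallel hyperplanes, fix one $y\in\gb(x,i,j)$, derive $x$ from $y$ and then $z$ from $x$ over $\supp(p_j)$, and finish with Proposition~\ref{subsetfact}(3). One minor simplification: no square root (hence no appeal to real-closedness) is actually needed, since writing $P$ as $\sum_i a_ix_i=b$ and $Q$ as $\sum_i a_ix_i=c$ gives $v(P,Q)=\frac{c-b}{\sum_i a_i^2}(a_1,\dots,a_n)$, which is already rational in the coefficients.
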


\begin{proof}
This is parallel to the previous argument.
\end{proof}

\begin{claim}
There is a finite set $a\subset\supp(p_i)$ such that $\gg(x, i, j)$ consists of points algebraic over $(\mathbb{R}\cap V)\cup a$.
\end{claim}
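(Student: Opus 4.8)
If $\gg(x,i,j)=\emptyset$ take $a=\emptyset$. Otherwise fix a point $y_0\in\gg(x,i,j)$ together with witnesses $x_j^0\in\dom(p_j\setminus c)$ and $x_k^0\in\dom(p_k\setminus c)$ making $x,y_0,x_j^0,x_k^0$ a rectangle listed in a clockwise or counterclockwise order, and let $a\subset\supp(p_i)$ be the finite set of coordinates of $y_0$. Given an arbitrary $y\in\gg(x,i,j)$ with witnesses $x_j\in\dom(p_j\setminus c)$ and $x_k\in\dom(p_k\setminus c)$, I must show that $y$ is algebraic over $(\mathbb{R}\cap V)\cup a$.

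First I would record the elementary algebra of a rectangle: four points $x,y,x_j,x_k$ in cyclic order form a rectangle precisely when $x+x_j=y+x_k$ and $(y-x)\cdot(x_k-x)=0$. The parallelogram identity eliminates $x$, since $x=y_0+x_k^0-x_j^0=y+x_k-x_j$; this yields the linear relation $y-y_0=(x_j-x_j^0)-(x_k-x_k^0)$, and substituting the two expressions for $x$ into the perpendicularity conditions adds two quadratic relations with integer coefficients in the coordinates of $y,y_0,x_j,x_j^0,x_k,x_k^0$. The geometric fact I mean to use is that opposite sides of a rectangle are parallel and equal: $x_k-x=x_j-y$ and $x_k^0-x=x_j^0-y_0$, whence $x_k-x_k^0=(x_j-x_j^0)-(y-y_0)$, a vector whose coordinates lie in $\supp(p_k)$ and simultaneously in the real closed field generated by $\supp(p_i)\cup\supp(p_j)$. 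A clean special case already illustrates the mechanism: if $x_j=x_j^0$ then $y-y_0=x_k^0-x_k$ has all coordinates both in $\supp(p_i)\subseteq\mathbb{R}\cap V[G_i]$ and in $\supp(p_k)\subseteq\mathbb{R}\cap V[G_k]$, so by Proposition~\ref{subsetfact}(4) they lie in $\mathbb{R}\cap V$, and $y$ is algebraic — indeed a $V$-translate of $y_0$ — over $(\mathbb{R}\cap V)\cup a$.

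In general $x_j\neq x_j^0$, and here I would finish by feeding the relations above into Proposition~\ref{subsetfact} for the pairwise mutually Noetherian extensions $V[G_i],V[G_j],V[G_k]$, organizing the elimination so that each application involves only two of the three models at a time, with the goal of showing successively that the coordinates of $x_j-x_j^0$, then those of $x_k-x_k^0$, and finally those of $y-y_0$ are algebraic over $(\mathbb{R}\cap V)\cup a$. The main obstacle — and the reason this is harder than the claims for $\ga$ and $\gb$, where a single condition $p_j$ already hosts the relevant sphere or pair of hyperplanes — is precisely that a rectangle through the fixed point $x$ genuinely involves all three extensions at once, whereas the hypothesis provides only pairwise mutual Noetherianity (not, say, mutual Noetherianity of $V[G_j][G_k]$ over $V[G_i]$); routing the bookkeeping through the parallel-sides identity $x_k-x=x_j-y$, so that $x$ disappears and no single invocation of Proposition~\ref{subsetfact} ever sees more than two of $V[G_i],V[G_j],V[G_k]$, is where the delicacy lies.
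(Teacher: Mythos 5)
Your algebraic setup is correct, and you have correctly diagnosed the central difficulty: a rectangle through $x$ genuinely entangles $V[G_i]$, $V[G_j]$, $V[G_k]$, yet the hypothesis only gives \emph{pairwise} mutual Noetherianity. But the proposal stops exactly where the work begins. The sentence ``I would finish by feeding the relations above into Proposition~\ref{subsetfact}\dots organizing the elimination so that each application involves only two of the three models at a time'' is a statement of intent, not an argument, and after eliminating $x$ your key relation $x_k-x_k^0=(x_j-x_j^0)-(y-y_0)$ still has one term in each of the three models, so it cannot by itself be fed into a pairwise invocation of Proposition~\ref{subsetfact}. The special case $x_j=x_j^0$ does work, but it is not where the content is.

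There is also a more basic problem with the choice of $a$. You take $a$ to be the coordinates of a \emph{single} point $y_0\in\gg(x,i,j)$; that amounts to claiming every element of $\gg(x,i,j)$ is algebraic over $(\mathbb{R}\cap V)\cup\{y_0\}$, which need not hold. The paper's proof instead considers, for each $y\in\gg(x,i,j)$, the hyperplane $H(y)$ through $y$ (and, automatically, through $x$) whose normal is $y-x_j(y)$; it lets $H=\bigcap_y H(y)$ and chooses $a\subset\gg(x,i,j)$ of minimum cardinality with $\bigcap_{y\in a}H(y)=H$. Already in $n=2$ the lines $H(y)$ through $x$ need not coincide, so this $a$ can have more than one element, and two elements of $a$ need not be mutually algebraic over $\mathbb{R}\cap V$. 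With that $a$ fixed, the paper then carries out the two-at-a-time splitting you gesture at, but via a different device: the containment $\bigcap_{y'\in a}H(y')\subseteq H(y)$ is encoded by a semialgebraic set $A$ whose parameters come only from $\supp(p_j)$, so Proposition~\ref{subsetfact}(2) with the pair $(V[G_i],V[G_j])$ yields a ground-model semialgebraic $B\subseteq A$ through $a^\smallfrown y$ with $B_a$ contained in the sphere having $[x,x_j(y)]$ (equivalently $[y,x_k(y)]$) as diameter; then the set picking out the farthest point from $x_k(y)$ has parameters only from $\supp(p_k)$, and a second application with the pair $(V[G_i],V[G_k])$ isolates $y$ uniquely. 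That Thales-circle/farthest-point mechanism is the missing idea; the parallel-sides identity alone does not produce it.
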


\begin{proof}
This is the heart of the whole construction and the reason why item (4) appears in Definition~\ref{posetdefinition}. For each point $y\in \gg(x, i, j)$ choose points $x_j(y)\in \dom(p_k\setminus c)$ and $x_k\in\dom(p_k\setminus c)$ witnessing the membership relation. Let $H(y)\subset\mathbb{R}^n$ be the hyperplane passing through $y$ and perpendicular to the vector $y-x_j(y)$; thus, $x\in H(y)$. Write $H=\bigcap_{y\in\gg(x, i, j)}H(y)$. Let $a\subset\gg(x, i, j)$ be a set of minimum cardinality such that $H=\bigcap_{y\in a}H(y)$; the set $a$ is finite. I will show that every point $y\in\gg(x, i, j)$ is algebraic over $(\mathbb{R}\cap V)\cup a$. This will prove the claim.

Let $y\in\gg(x, i, j)$ be an arbitrary point. Consider the set $A=\{u\in(\mathbb{R}^n)^{m+1}\colon \forall z\in\mathbb{R}^n\ (\forall l\in m\ (x_j(a(l))-u(l))\cdot (z-u(l))=0)\to (x_j(y)-u(m))\cdot (z-u(l))=0\}$. The set $A$ is semialgebraic in parameters from $\supp(p_j)$ and contains the tuple $a^\smallfrown y$. By Fact~\ref{subsetfact}(2) and the Noetherian assumption between $V[G_i]$ and $V[G_j]$, there is a set $B\subset A$ semialgebraic over $\mathbb{R}\cap V$ such that $a^\smallfrown y\in B$. Note that $B_a$ is a subset of the hypersphere of which the segment between $x_j(y)$ and $x$, and also the segment between $x_k(y)$ and $y$, is a diameter. Let $C=\{u\in B\colon u(m)$ is the farthest point of $B_{u\restriction m}$ from $x_i(y)\}$. This is a semialgebraic set in parameters from $\supp(p_k)$. By Fact~\ref{subsetfact}(2) and the Noetherian assumption on $V[G_i]$ and $V[G_k]$, there is a set $D\subseteq C$ semialgebraic over $\mathbb{R}\cap V$ such that $a^\smallfrown y\in D$. Clearly, $D_a=\{y\}$. It follows that $y$ is algebraic over $(\mathbb{R}\cap V)\cup a$ as desired.
\end{proof}

\noindent Now, define the set $f(x)\subset \gw$ of forbidden colors by setting it to the union of $p_i''(\ga(x, i, j)\cup \gb(x, i, j)\cup \gg(x, i, j))$ for all choices of distinct indices $i, j\in 3$.
By the claims and Definition~\ref{posetdefinition}(4) applied to $p_i\leq c$, $f(x)\in I$. Let $b\subset\gw$ be a set in the ideal $I$ which cannot be covered by finitely many sets of the form $f(x)$ for $x\in d$, and finitely many singletons. Let $q\colon F^n\to\gw$ be any map extending $\bigcup_ip_i$ and such that $q\restriction d$ is an injection such that $q(x)\in b\setminus f(x)$ holds for every $x\in d$. I claim that $q$ is the requested common lower bound of the conditions $p_i$ for $i\in 3$.

\begin{claim}
$q$ is a $\Gamma_n$-coloring.
\end{claim}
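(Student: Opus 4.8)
The plan is to fix a rectangle $R\subseteq\dom(q)=F^n$ and show it is not monochromatic under $q$, dividing into cases according to the number $k$ of vertices of $R$ that lie in $d=F^n\setminus\bigcup_i\dom(p_i)$. If $k\geq 2$ we are done at once: two vertices of $R$ land in $d$ and receive distinct colors because $q\restriction d$ is an injection. The case $k=1$ is the core of the argument and uses the sets $\ga,\gb,\gg$ and the choice of $q$ on $d$; the case $k=0$ splits further according to how the vertices of $R$ distribute among $\dom(p_0),\dom(p_1),\dom(p_2)$. Throughout I will use that $p_i\leq c$ forces $\mathbb{R}\cap V\subseteq\supp(p_i)$, so that Fact~\ref{subsetfact}(4) gives $\supp(p_i)\cap\supp(p_j)=\mathbb{R}\cap V$ for $i\neq j$, hence $\dom(p_i)\cap\dom(p_j)=(\mathbb{R}\cap V)^n=\dom(c)$, and that the fourth vertex of a rectangle is a fixed rational affine combination of the other three.

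For $k=1$, let $x$ be the unique vertex of $R$ in $d$ and recall $q(x)\in b\setminus f(x)$; it is enough to produce some other vertex $y$ of $R$ with $q(y)\in f(x)$, since then $q(y)=p_i(y)$ for the index $i$ witnessing $y\in f(x)$ and $q(x)\neq q(y)$. The remaining three vertices $v_1,v_2,v_3$ all lie in $\bigcup_i\dom(p_i)$, and by the closure/field-intersection facts above they cannot all lie in a single $\dom(p_i)$ (else $x$ would too). If two of them, say $v_1,v_2$, lie in a common $\dom(p_i)$, then the remaining vertex $v_3$ lies in $\dom(p_j\setminus c)$ for a unique $j\neq i$, and $v_1,v_2$ are either opposite or adjacent on $R$ and accordingly determine a hypersphere, resp.\ a pair of parallel hyperplanes, algebraic over $\supp(p_i)$ on which $x$ and $v_3$ are opposite; thus $v_3\in\ga(x,j,i)$, resp.\ $v_3\in\gb(x,j,i)$, giving $q(v_3)=p_j(v_3)\in f(x)$. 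If no two of the $v_l$ share a condition, they occupy the three distinct sets $\dom(p_0\setminus c),\dom(p_1\setminus c),\dom(p_2\setminus c)$; taking $v_a$ adjacent to $x$ on $R$ and reading off the cyclic order $x,v_a,v_m,v_b$ (with $v_m$ the vertex of $R$ opposite $x$) witnesses $v_a\in\gg(x,\cdot,\cdot)$ with $v_m,v_b$ as the two required points, so again $q(v_a)\in f(x)$.

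For $k=0$ we have $R\subseteq\bigcup_i\dom(p_i)$ and $q$ agrees with $\bigcup_i p_i$ on $R$. If all four vertices lie in one $\dom(p_i)$, then $R$ is not monochromatic since $p_i$ is a $\Gamma_n$-coloring. Otherwise, by the closure property together with $\dom(p_i)\cap\dom(p_j)=\dom(c)$, the only remaining ``two-condition'' splitting is that exactly two vertices lie in $\dom(p_i\setminus c)$ and exactly two in $\dom(p_j\setminus c)$ for two indices $i\neq j$. For such a splitting the two vertices in $\dom(p_i\setminus c)$ are opposite or adjacent on $R$, and correspondingly the two vertices in $\dom(p_j\setminus c)$ are opposite on the circumscribed hypersphere of $R$, resp.\ on the two hyperplanes through the $\supp(p_i)$-vertices perpendicular to their connecting edge. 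A short computation shows that the center and radius-squared of that hypersphere, resp.\ the common normal direction and the two offsets of those hyperplanes, are computed equally from the $\supp(p_i)$-data and from the $\supp(p_j)$-data, hence lie in $\supp(p_i)\cap\supp(p_j)=\mathbb{R}\cap V=\supp(c)$; so the object is visible from $\supp(c)$, and Definition~\ref{posetdefinition}(2), resp.\ (3), applied to $p_j\leq c$ makes the two $p_j$-colors distinct.

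The step I expect to be the main obstacle is ruling out, in the $k=0$ case, any splitting that genuinely involves all three conditions; by the counting above such a splitting is a rectangle with one vertex in each of $\dom(p_0\setminus c),\dom(p_1\setminus c),\dom(p_2\setminus c)$ and a fourth vertex either in $\dom(c)$ or in one of these same three sets, and in no such configuration can colors be compared across two of $p_0,p_1,p_2$, so the only possibility is that such rectangles do not exist. For $n=2$ this is straightforward: using Fact~\ref{subsetfact}(1) one descends a vertex lying on a circle or line visible from some $\supp(p_l)$ to a subset algebraic over $\mathbb{R}\cap V$, which is either finite — forcing that vertex into $(\mathbb{R}\cap V)^n=\dom(c)$ — or the whole circle/line, in which case a companion vertex, being an affine image of the first over $\mathbb{R}\cap V$, lies in $\supp(p_l)^n\cap\supp(p_{l'})^n=(\mathbb{R}\cap V)^n$ by Fact~\ref{subsetfact}(4); either way one of the supposedly new vertices lands in $\dom(c)$, a contradiction. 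For $n\geq 3$ a proper algebraic subset of a hypersphere may be infinite, so this must be replaced by the semialgebraic descent of Fact~\ref{subsetfact}(1)--(2), applied twice for the two relevant mutually Noetherian pairs among $V[G_0],V[G_1],V[G_2]$ together with a canonical (e.g.\ farthest-point) selection, exactly as in the Claim about $\gg$ above; this is the delicate part of the whole argument.
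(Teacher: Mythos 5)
Your overall case structure matches the paper's exactly: count the vertices of $R$ lying in $d$, handle two or more by injectivity of $q\restriction d$, handle exactly one via the sets $\ga,\gb,\gg$ and the requirement $q(x)\notin f(x)$, and handle none by splitting on how many of $p_0,p_1,p_2$ are genuinely involved. Your $k=1$ and the one- and two-condition subcases of $k=0$ are essentially the paper's Cases 2 and 1.1--1.2, and your observation that the relevant hypersphere/hyperplane data lie in $\supp(p_i)\cap\supp(p_j)=\mathbb{R}\cap V$ is exactly the paper's use of Proposition~\ref{subsetfact}(4).

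The one place where you diverge, and where your write-up has a real gap, is the $k=0$ three-condition subcase (the paper's Case 1.3). You correctly see that this configuration must be shown not to occur, and you correctly note the $1$-$1$-$1$-with-a-$\dom(c)$-vertex variant, which the paper's phrasing glosses over. But your argument for $n=2$ (proper algebraic subsets of lines/circles are finite; otherwise descend to the whole curve in $V$) is an $n=2$-specific detour, and for $n\geq 3$ you explicitly leave the argument unfinished, appealing to ``the semialgebraic descent \ldots exactly as in the Claim about $\gg$'' as ``the delicate part.'' In fact nothing so heavy is needed. The paper's Case 1.3 is handled uniformly in $n$ by a much lighter argument than the $\gg$-claim: given two vertices of $R$ in $\dom(p_i)$ and single vertices $x_j\in\dom(p_j\setminus c)$, $x_k\in\dom(p_k\setminus c)$, the hypersphere (opposite case) or pair of parallel hyperplanes (adjacent case) determined by the $\dom(p_i)$-pair is algebraic over $\supp(p_i)$; one application of Proposition~\ref{subsetfact}(1) gives an algebraic $T\subseteq S$ (resp.\ $Q'_k\subseteq Q_k$) coded in $V$ and containing $x_k$, and then $x_k$ is \emph{definable in $V[G_j]$} as the farthest (resp.\ nearest) point of $T$ from $x_j$ --- because $x_j$ and $x_k$ are antipodal on $S$ (resp.\ the segment $x_jx_k$ is normal to the hyperplanes), that farthest/nearest point is unique and equals $x_k$. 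This puts $x_k\in\mathbb{R}^n\cap V[G_j]\cap V[G_k]=\mathbb{R}^n\cap V$, contradicting $x_k\notin\dom(c)$. No second semialgebraic descent and no dimension split is required; the same selection argument also disposes of your $1$-$1$-$1$-$\dom(c)$ variant by applying it with the $\dom(c)$-vertex counted as one of the two $\dom(p_i)$-vertices. You should replace your two-track argument by this single one.
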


\begin{proof}
Let $R\subset F^n$ be a rectangle; I must show that $q$ is not constant on it. The proof breaks into numerous cases and subcases.

\noindent\textbf{Case 1.} $R$ contains no elements of the set $d$. Let $a\subset 3$ be an inclusion minimal set such that $R\subset\bigcup_{i\in a}\dom(p_i)$.

\noindent\textbf{Case 1.1.} $|a|=1$. Here, $R$ is not monochromatic because $p_i$ is a $\Gamma_n$-coloring where $i$ is the unique element of $a$.

\noindent\textbf{Case 1.2.} $|a|=2$, containing indices $i, j\in 3$. The closure properties of the domains of $p_i$ and $p_j$ imply that each set $\dom(p_i\setminus c)$ and $\dom(p_j\setminus c)$ contains exactly two points of $R$. 

\noindent\textbf{Case 1.2.1.} The two points in $\dom(p_i\setminus C)\cap R$ are adjacent in $R$. Then the hyperplanes containing the two respective points and perpendicular to their connector are algebraic over both $V[G_i]$ and $V[G_j]$, so in $V$ by Proposition~\ref{subsetfact}(4). The two points are opposite on these planes and therefore they receive distinct $p_i$ colors by Definition~\ref{posetdefinition}(3). Therefore, $R$ is not monochromatic.

\noindent\textbf{Case 1.2.2.} The two points in $\dom(p_i\setminus C)\cap R$ are opposite in $R$. Then both the center of the rectangle $R$ and the real number which is half of the length of the rectangle diagonal belong to both $V[G_i]$ and $V[G_j]$, so to $V$ by Proposition~\ref{subsetfact}(4). The hypersphere $S$ they determine is visible from $V$, and the two points of $\dom(p_i\setminus C)\cap R$ are opposite on $S$. Applying Definition~\ref{posetdefinition}(2) to $p_i\leq c$, it is clear that the two points receive distinct $p_i$ colors and $R$ is not monochromatic.

\noindent\textbf{Case 1.3.} $|a|=3$. Then there must be index $i\in 3$ such that $\dom(p_i\setminus c)$ contains exactly two points of $R$ and $\dom(p_j\setminus c)$ contains exactly one point of $R$ for each index $j\neq i$. I will show that this case cannot occur regardless of the colors on the rectangle $R$. For an index $j\neq i$, write $x_j$ for the unique point in $R\cap\dom(p_j\setminus c)$.

\noindent\textbf{Case 1.3.1.} The two points in $\dom(p_i\setminus C)\cap R$ are adjacent in $R$. Consider the two hyperplanes $Q_j, Q_k$ containing these two points respectively and perpendicular to their connecting segment, indexed by $j, k\neq i$. Reindexing if necessary, $x_j\in Q_j$ and $x_k\in Q_k$ holds. By Proposition~\ref{subsetfact}(1), there must be algebraic sets $Q'_j\subseteq Q_j$ and $Q'_k\subseteq Q_k$ visible from the ground model and still containing $x_j$ and $x_k$. This means that $x_k$ can be recovered in $V[G_j]$ as the closest point to $x_j$ in $Q'_k$. This is impossible as $\mathbb{R}\cap V[G_j]\cap V[G_k]=\mathbb{R}\cap V$.

\noindent\textbf{Case 1.3.2.} The two points in $\dom(p_i\setminus C)\cap R$ are opposite in $R$. Consider the hypersphere $S$ in which these two points are opposite. $S$ then contains $x_j$ and $x_k$ and these two points are opposite in $S$. By Fact~\ref{subsetfact}, there must be algebraic sets $S_j\subseteq S$ and $S_k\subseteq S$ visible from the ground model and still containing $x_j$ and $x_k$. This means that $x_k$ can be recovered in $V[G_j]$ as the farthest point to $x_j$ in $S_k$. This is impossible as $\mathbb{R}\cap V[G_j]\cap V[G_k]=\mathbb{R}\cap V$.

\noindent\textbf{Case 2.} $R$ contains exactly one point in the set $d$; call this unique point $x$. Let $a\subset 3$ be an inclusion minimal set such that $R\setminus\{x\}\subset\bigcup_{i\in a}\dom(p_i)$.

\noindent\textbf{Case 2.1.} $|a|=1$. This cannot occur since $\dom(p_i)$ would contain $x$ with the other three vertices of $R$, where $i\in 3$ is the only element of $a$.

\noindent\textbf{Case 2.2.} $|a|=2$, containing indices $i, j\in 3$. Here, for one of the indices (say $j$) $\dom(p_j)$ has to contain two elements of $R$ while $\dom(p_i\setminus c)$ contains just one; denote the latter point by $x_i$.

\noindent\textbf{Case 2.2.1.} The points $x_i$ and $x$ are opposite on the rectangle $R$. Then $x_i\in\ga(x, i, j)$ as the hypersphere on which $x_i, x$ are opposite points is the same as the one on which the other two points are opposite, and therefore is algebraic over $\supp(p_j)$. The choice of the map $q$ shows that $q(x)\neq p_i(x_i)$, so $R$ is not monochromatic.

\noindent\textbf{Case 2.2.1.} The points $x_i$ and $x$ are opposite on the rectangle $R$. Then $x_i\in\gb(x, i, j)$ as $x_i, x$ are opposite points on the hyperplanes passing through the other two points and perpendicular to their connecting segment, and these are algebraic over $\supp(p_j)$. The choice of the map $q$ shows that $q(x)\neq p_i(x_i)$, so $R$ is not monochromatic.

\noindent\textbf{Case 2.3.} $|a|=3$. For each index $i\in 3$ let $x_i\in R$ be the unique point in $\dom(p_i\setminus c)$. Let $i, j, k\in 3$ be indices such that the sequence $x, x_i, x_j, x_k$ goes around the rectangle $R$. Then $x_i\in \gg(x, i, j)$ holds. The choice of the map $q$ shows that $q(x)\neq p_i(x_i)$, so $R$ is not monochromatic.

\noindent\textbf{Case 3.} $R$ contains more than one point in the set $d$. Then $R$ is not monochromatic as $d\restriction d$ is an injection.
\end{proof}

\noindent Finally, let $i\in 3$ be an index; I must prove that $q\leq p_i$ holds. It is clear that $p_i\subset q$ holds. The following claims verify the other items of Definition~\ref{posetdefinition}. 

\begin{claim}
\label{sphereclaim}
If $S\subset\mathbb{R}^n$ is a hypersphere algebraic over $\supp(p_i)$ and $x, y\in\dom(q\setminus p_i)$ are opposite points on it, then $q(x)\neq q(y)$.
\end{claim}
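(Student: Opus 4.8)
The plan is to split into cases according to where the two opposite points $x,y\in\dom(q\setminus p_i)$ sit. Since $p_i\leq c$, both $x$ and $y$ avoid $\dom(c)$, and since for distinct $j,k\in 3$ the domains $\dom(p_j)$ and $\dom(p_k)$ overlap only inside $\dom(c)=(\mathbb{R}\cap V)^n$ (by Proposition~\ref{subsetfact}(4), as $\supp(p_j)\cap\supp(p_k)\subseteq\mathbb{R}\cap V$), each of $x,y$ lies either in $d$ or in $\dom(p_j\setminus c)$ for a uniquely determined index $j\neq i$. If $x,y\in d$ then $q(x)\neq q(y)$ because $q\restriction d$ is an injection. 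If $x\in d$ and $y\in\dom(p_j\setminus c)$ for some $j\neq i$ (or vice versa), then the very hypersphere $S$ witnesses $y\in\ga(x,j,i)$, so $q(y)=p_j(y)\in p_j''\ga(x,j,i)\subseteq f(x)$, while $q(x)$ was chosen in $b\setminus f(x)$; hence $q(x)\neq q(y)$.

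Next, suppose $x,y\in\dom(p_j\setminus c)$ for the same index $j\neq i$. Here I will show that $S$ is already algebraic over $\supp(c)=\mathbb{R}\cap V$ and invoke Definition~\ref{posetdefinition}(2) for the pair $p_j\leq c$. Since $x,y$ are opposite on $S$, the center of $S$ is the midpoint $(x+y)/2$ and its squared radius is $|x-y|^2/4$; both quantities lie in $\mathbb{R}\cap V[G_j]$ because $x,y\in V[G_j]$, and both lie in $\supp(p_i)\subseteq\mathbb{R}\cap V[G_i]$ because $S$ is a hypersphere algebraic over $\supp(p_i)$. By Proposition~\ref{subsetfact}(4) they therefore lie in $\mathbb{R}\cap V$, so $S$ is algebraic over $\mathbb{R}\cap V$; as $x,y\in\dom(p_j\setminus c)$ are opposite on $S$, Definition~\ref{posetdefinition}(2) applied to $p_j\leq c$ gives $q(x)=p_j(x)\neq p_j(y)=q(y)$.

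The main obstacle is the remaining case, where $x\in\dom(p_j\setminus c)$ and $y\in\dom(p_k\setminus c)$ with $\{i,j,k\}=3$, and the plan is to show that this configuration simply cannot arise — this is the analogue of Case~1.3.2 of the preceding claim. The hypersphere $S$ is algebraic over $\supp(p_i)\subseteq V[G_i]$, so by Proposition~\ref{subsetfact}(1) applied to the mutually Noetherian pair $V[G_k], V[G_i]$ there is a set $S'\subseteq S$ algebraic over $V$ with $y\in S'$. Because $x$ lies on $S$ and $y$ is its antipode, $y$ is the unique point of $S$, and hence of $S'$, at maximal distance from $x$. This last property is expressible by a first-order formula over the real closed field with parameters $x$ and the coefficients of a defining polynomial of $S'$, all of which belong to $V[G_j]$; by model completeness of the theory of real closed fields, the reals of $V[G_j]$ form an elementary submodel of $\mathbb{R}$, so the unique farthest point of $S'$ from $x$ is computed correctly in $V[G_j]$, and therefore $y\in V[G_j]$. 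But $y\in V[G_k]$, so $y\in\mathbb{R}^n\cap V[G_j]\cap V[G_k]=\mathbb{R}^n\cap V=\dom(c)$ by Proposition~\ref{subsetfact}(4), contradicting $y\in\dom(p_k\setminus c)$. This finishes the case analysis.
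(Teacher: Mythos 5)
Your proof is correct and follows essentially the same four-way case split as the paper's argument (both in $d$; one in $d$ and one in some $\dom(p_j\setminus c)$; both in the same $\dom(p_j\setminus c)$; one in $\dom(p_j\setminus c)$ and one in $\dom(p_k\setminus c)$ for $j\neq k$). The only cosmetic differences are that in the third case you compute the center $(x+y)/2$ and radius explicitly rather than just asserting that $S$ is algebraic over $\supp(p_j)$, and in the fourth case you recover $y$ from the ground-model subset of $S$ containing it rather than recovering $x$ — a symmetric choice leading to the same contradiction via Proposition~\ref{subsetfact}(4).
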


\begin{proof}
The arguments splits into cases.

\noindent\textbf{Case 1.} If $x, y$ both belong to the set $d$, then $q(x)\neq q(y)$ as $q\restriction d$ is an injection.

\noindent\textbf{Case 2.} If $x\in d$ and $y\notin d$, let $j\in 3$ be an index distinct from $i$ such that $y\in\dom(p_j\setminus c)$. Then, $y\in\ga(x, j, i)$ holds and therefore $q(x)\neq p_j(y)$ as $q(x)\notin p_j''\ga(x, j, i)$.

\noindent\textbf{Case 3.} If neither of the points $x, y$ belongs to $d$, then there are two subcases.

\noindent\textbf{Case 3.1.} There is $j\in 3$ such that both $x, y$ belong to $\dom(p_j)\setminus c$. In such a case, the hypersphere $S$ is also algebraic over $\supp(p_j)$. By the Noetherian assumption on the models $V[G_i]$ and $V[G_j]$ and Proposition~\ref{subsetfact}(4), the hypersphere $S$ is algebraic over the ground model. It follows that $q(x)=p_j(x)\neq p_j(y)=q(y)$ by Definition~\ref{posetdefinition} (2) applied to $p_j\leq c$.

\noindent\textbf{Case 3.2.} $x\in \dom(p_j\setminus c)$ and $y\in\dom(p_k\setminus c)$ for distinct indices $j, k$. By the Noetherian assumption on the models $V[G_i]$ and $V[G_j]$ and Proposition~\ref{subsetfact}(1), there is a set $T\subseteq S$ algebraic over the ground model such that $x\in T$. Then $x$ can be recovered in $V[G_k]$ as the point on $T$ farthest away from $y$, contradicting the fact that $V[G_j]\cap V[G_k]=0$.
\end{proof}

\begin{claim}
If $P, Q\subset\mathbb{R}^n$ are parallel hyperplanes algebraic over $\supp(p_i)$ and $x, y\in\dom(q\setminus p_i)$ are opposite points on them, then $q(x)\neq q(y)$.
\end{claim}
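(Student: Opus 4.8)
The plan is to mirror the case analysis of the preceding Claim~\ref{sphereclaim}, replacing "hypersphere" by "pair of parallel hyperplanes," "opposite on $S$" by "opposite on $P,Q$," and the geometric operations "farthest/closest point" by their hyperplane analogues. Fix parallel hyperplanes $P,Q$ algebraic over $\supp(p_i)$ and opposite points $x,y\in\dom(q\setminus p_i)$ on them. As before I would split into three cases according to how many of $x,y$ lie in the set $d=F^n\setminus\bigcup_i\dom(p_i)$.

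First, if both $x,y\in d$, then $q(x)\neq q(y)$ immediately because $q\restriction d$ is an injection. Second, if exactly one of them, say $x$, lies in $d$, then $y\in\dom(p_j\setminus c)$ for some $j\neq i$. Here I would observe that $y\in\gb(x,j,i)$: the hyperplanes $P,Q$ are algebraic over $\supp(p_i)$, and $x,y$ are opposite points on them, which is exactly the defining property of $\gb(x,j,i)$. By the choice of the map $q$ we have $q(x)\notin p_j''\gb(x,j,i)$, hence $q(x)\neq p_j(y)=q(y)$.

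Third, suppose neither point lies in $d$. If both $x,y$ lie in $\dom(p_j\setminus c)$ for a single index $j$, then the parallel hyperplanes $P,Q$ are algebraic over $\supp(p_i)$ and over $\supp(p_j)$; by the Noetherian assumption on $V[G_i]$ and $V[G_j]$ together with Proposition~\ref{subsetfact}(4), they are algebraic over the ground model, so Definition~\ref{posetdefinition}(3) applied to $p_j\leq c$ yields $q(x)=p_j(x)\neq p_j(y)=q(y)$. If instead $x\in\dom(p_j\setminus c)$ and $y\in\dom(p_k\setminus c)$ for distinct $j,k$, I would use Proposition~\ref{subsetfact}(1) to pull the hyperplane $P$ (through $x$) down to an algebraic subset $T\subseteq P$ defined over the ground model with $x\in T$; then $x$ is recoverable in $V[G_k]$ as the unique point of $T$ lying on the line through $y$ perpendicular to $Q$ (equivalently, the foot of the perpendicular from $y$ to the hyperplane $P$, intersected with $T$), contradicting $\mathbb{R}\cap V[G_j]\cap V[G_k]=\mathbb{R}\cap V$ from Proposition~\ref{subsetfact}(4).

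The only point requiring a moment's care is the last subcase: one must check that the geometric operation recovering $x$ from $y$ and the ground-model-algebraic set $T$ is genuinely definable over $V[G_k]$ (it is, since $y\in V[G_k]$, the perpendicular direction to $Q$ is determined by $P,Q$ which are in $V[G_i]\subseteq$ the relevant models, and "foot of perpendicular" is a semialgebraic operation), and that this recovered point is unique — which holds because $T\cap(\text{perpendicular line through }y)$ is a single point, as $T$ lies in the hyperplane $P$ and the line is transverse to $P$. With that observation the proof is identical in structure to Claim~\ref{sphereclaim}, and I do not expect any further obstacle.
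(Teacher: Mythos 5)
Your overall structure is exactly right, and it matches the paper's (one-line) proof, which simply defers to the argument for Claim~\ref{sphereclaim}. Cases~1, 2, and~3.1 are all correct as you have them: the use of $\gb(x,j,i)$ in Case~2 is the right analogue of $\ga$, and in Case~3.1 the parallel hyperplanes are indeed determined by $x,y$ (each is the hyperplane through the respective point perpendicular to the connecting segment), hence algebraic over $\supp(p_j)$ as well as over $\supp(p_i)$, so Proposition~\ref{subsetfact}(4) puts them in the ground model.

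The one place where your write-up is actually wrong, rather than merely loose, is in the final subcase and in the ``moment's care'' paragraph. You recover $x$ in $V[G_k]$ as the intersection of $T$ with the line through $y$ perpendicular to $Q$ (equivalently, the foot of the perpendicular from $y$ to $P$), and you justify the definability of this operation in $V[G_k]$ by asserting that $P,Q\in V[G_i]\subseteq$ ``the relevant models.'' That inclusion is false: $V[G_i]$ and $V[G_k]$ are \emph{mutually Noetherian} extensions of $V$, not nested, so a hyperplane algebraic over $\supp(p_i)$ need not be available in $V[G_k]$ at all. Your recovery recipe therefore cannot be run in $V[G_k]$ as stated. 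The fix is to use a recovery operation that refers only to $T$ (which is in the ground model) and $y$ (which is in $V[G_k]$): take $x$ to be the point of $T$ \emph{closest} to $y$. Since $T\subseteq P$ and $x$ is the orthogonal projection of $y$ onto $P$ (that is precisely what ``opposite on $P,Q$'' means), every $z\in T\subseteq P$ satisfies $|z-y|\geq |x-y|$ with equality only at $z=x$; and $x\in T$. So ``closest point of $T$ to $y$'' uniquely returns $x$, is semialgebraic in parameters from $V\cup\{y\}$, and is the exact parallel of ``farthest point'' in the hypersphere case. With that replacement the proof goes through.
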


\begin{proof}
The argument is similar to that for Claim~\ref{sphereclaim}.
\end{proof}

\begin{claim}
If $a\subset F^n$ is a finite set, then $q''\gd(p_i, q, a)\in I$. 
\end{claim}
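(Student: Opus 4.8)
The plan is to reduce item (4) of Definition~\ref{posetdefinition} for the pair $q\leq p_i$ to item (4) of the conditions $p_j\leq c$ for $j\in 3$, together with the fact that $q$ uses colors from the small set $b\in I$ on the points of $d$. First I would fix a finite set $a\subset F$ (note the domain is $F$, not $F^n$, since $\supp(q)=F$). Then I would split $\gd(p_i,q,a)$ into the part inside $d$ and the part outside $d$. The part inside $d$, namely $\gd(p_i,q,a)\cap d$, is mapped by $q$ into $b$, so $q''(\gd(p_i,q,a)\cap d)\subseteq b\in I$ and we are done with that piece. So the real work is to show that $q''(\gd(p_i,q,a)\setminus d)\in I$; since $q$ agrees with $\bigcup_j p_j$ off $d$, this set is $\bigcup_{j\in 3} p_j''(\gd(p_i,q,a)\cap\dom(p_j\setminus c))$, and it suffices to handle each $j$ separately.

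The key step is that for each $j\in 3$ there is a finite set $a_j\subset\supp(p_j)$ such that $\gd(p_i,q,a)\cap\dom(p_j\setminus c)\subseteq\gd(c,p_j,a_j)$; then assumption~(4) of $p_j\leq c$ (which holds since $p_j\leq c$ in $V[G_j]$) finishes the argument. For $j=i$ this is straightforward: any point of $\dom(p_i\setminus c)$ algebraic over $\supp(p_i)\cup a$ lies in $\gd(c,p_i,a')$ where $a'\subset\supp(p_i)$ is a maximal subset algebraically independent over $\supp(c)=\R\cap V$ (wait---$c$ is the ground model coloring, so $\supp(c)$ is all of $\R\cap V$; here I would instead take $a'$ to be the coordinates of a point, or simply observe the point is algebraic over $(\R\cap V)\cup a$ already lands in $\gd(c,p_i,\emptyset)$ once $a\subseteq\supp(p_i)$ after enlarging). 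For $j\neq i$, a point $y\in\dom(p_j\setminus c)$ which is algebraic over $\supp(p_i)\cup a$ is, after passing to a maximal algebraically independent subset and invoking Proposition~\ref{subsetfact}(3) applied to the Noetherian pair $V[G_i],V[G_j]$, seen to be algebraic over $(\R\cap V)\cup a''$ for a finite $a''\subset\R\cap V[G_j]$ of size bounded by $|a|$ using the matroid property; intersecting with $\supp(p_j)$ one can choose $a''\subset\supp(p_j)$, so $y\in\gd(c,p_j,a'')$. This mirrors the matroid-plus-\ref{subsetfact} manipulations already used in Proposition~\ref{compproposition} and in the transitivity proof of Proposition~\ref{closedproposition}.

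Concretely, the steps in order: (i) fix finite $a\subset F$; (ii) peel off $\gd(p_i,q,a)\cap d$, whose $q$-image is contained in $b\in I$; (iii) for the rest, split over $j\in 3$ according to which $\dom(p_j\setminus c)$ contains the point; (iv) for each $j$, use the matroid of algebraic independence over $(\R\cap V)$ to extract a finite $a_j$ and apply Proposition~\ref{subsetfact}(3) (Noetherianness of $V[G_i]$ versus $V[G_j]$) to push the algebraicity down to the ground model, concluding $\gd(p_i,q,a)\cap\dom(p_j\setminus c)\subseteq\gd(c,p_j,a_j)$; (v) apply~(4) of $p_j\leq c$ to get $p_j''\gd(c,p_j,a_j)\in I$; (vi) combine: $q''\gd(p_i,q,a)$ is a finite union of sets in $I$, hence in $I$.

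The main obstacle I expect is bookkeeping in step (iv) for the case $j\neq i$: one must be careful that "algebraic over $\supp(p_i)\cup a$" genuinely transfers to "algebraic over $(\R\cap V)\cup(\text{finite subset of }\supp(p_j))$" with a uniform finite parameter set working for all relevant $y$ at once---this is exactly where Proposition~\ref{subsetfact}(3) and the matroid rank bound $|a_j|\le|a|$ do their job, and where one must take care that the parameters witnessing algebraicity can be chosen inside $\supp(p_j)$ rather than merely in $\R\cap V[G_j]$ (using that $\supp(p_j)$ is real closed and the relevant witnesses are definable from data in $\supp(p_j)\cup a$). Everything else is routine given the tools already developed.
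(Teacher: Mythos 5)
Your proposal is essentially the paper's argument: peel off the $d$-part (whose $q$-image lies in $b$), split the rest over $j\in 3$, choose an inclusion-maximal algebraically independent subset $a_j$ of $\gd(p_i,q,a)\cap\dom(p_j)$ over $\supp(p_i)$ with $|a_j|\leq|a|$ by the matroid rank bound, use Proposition~3.7(3) to conclude $\gd(p_i,q,a)\cap\dom(p_j)\subseteq\gd(c,p_j,a_j)$, and finish by applying Definition~3.3(4) to $p_j\leq c$ and closure of $I$ under finite unions. Two small remarks: your $j=i$ case is vacuous, since $\gd(p_i,q,a)\subseteq\dom(q\setminus p_i)$ is disjoint from $\dom(p_i)$ by definition, so no separate treatment is needed; and your worry in step (iv) about coaxing the parameters into $\supp(p_j)$ resolves trivially because $a_j$ is chosen inside $\dom(p_j)=\supp(p_j)^n$, so its coordinates already lie in $\supp(p_j)$. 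You also correctly flag the typo $a\subset F^n$ versus $a\subset F$.
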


\begin{proof}
For each index $j\in 3$ distinct from $i$, let $a_j\subset \gd(p_i, q, a)\cap \dom(p_j)$ be an inclusion-maximal set which is algebraically free over $\supp(p_i)$. Since sets algebraically free over $\supp(p_i)$ form a matroid, $|a_j|\leq |a|$. By Proposition~\ref{subsetfact}(3) $\gd(p_i, q, a)\cap \dom(p_j)=\gd(c, p_j, a_j)$ holds. This means that $\gd(p_i, q, a)=\gd(c, p_j, a_j)\cup \gd(c, p_k, a_k)\cup d$ where $j, k\in 3$ are the two indices distinct from $i$. Now, $p_j''(c, p_j, a_j)\in I$ by Definition~\ref{posetdefinition}(4) applied to $p_j\leq c$, $p_k''(c, p_k, a_j)\in I$ by Definition~\ref{posetdefinition}(4) applied to $p_k\leq c$, and $q''b\in I$ as this set is a subset of $b$. As the ideal $I$ is closed under unions and subsets, $q''\gd(p_i, q, a)\in I$ as desired.
\end{proof}

\noindent This concludes the proof of item (1) of the theorem. For item (2), if CH holds and $p\in P_n$ is a condition, by (1) it is enough to produce a total $\Gamma_n$-coloring $c$ such that $\coll(\gw, \mathbb{R})\Vdash\check c\leq\check p$. To this end, choose an enumeration $\langle x_\ga\colon\ga\in\gw_1\rangle$ of $\mathbb{R}^n$ and by recursion on $\ga\in\gw_1$ build conditions $p_\ga\in P_n$ so that

\begin{itemize}
\item $p=p_0\geq p_1\geq\dots$;
\item $x_\ga\in\dom(p_{\ga+1})$;
\item $p_{\ga}=\bigcup_{\gb\in\ga}p_\gb$ for limit ordinals $\ga$.
\end{itemize}

\noindent The successor step is possible by Corollary~\ref{densecorollary} and the limit step by Proposition~\ref{closedproposition}. In the end, let $c=\bigcup_\ga p_\ga$ and observe that $c$ is a total $\Gamma_n$-coloring and $c\leq p$.
\end{proof}

\noindent Finally, I can complete the proof of Theorem~\ref{maintheorem}. Let $n\geq 2$ be a number. Let $\kappa$ be an inaccessible cardinal. Let $W$ be the choiceless Solovay model derived from $\kappa$. Let $P_n$ be the Suslin poset of Definition~\ref{posetdefinition}, and let $G\subset P_n$ be a filter generic over $W$. $W[G]$ is a model of ZF+DC since it is a $\gs$-closed extension of a model of ZF+DC. In $W[G]$, the chromatic number of $\Gamma_n$ by Corollary~\ref{densecorollary}. In $W[G]$, every non-null subset of the plane contains an equilateral triangle by the conjunction of Theorem~\ref{preservationtheorem} and Theorem~\ref{dimensionbalance}. The proof is complete.

\bibliographystyle{plain} 
\bibliography{odkazy,zapletal,shelah}

\end{document}